\newcommand{\br}{\mathbf{r}}
\newcommand{\Z}{\mathbb{Z}}
\newcommand{\R}{\mathbb{R}}
\newcommand{\C}{\mathbb{C}}
\newcommand{\bx}{\textbf{x}}
\newcommand{\ba}{\textbf{a}}
\newcommand{\bn}{\textbf{n}}
\newcommand{\vol}{\textrm{vol}}
\newcommand{\lcm}{\textrm{lcm}}
\title{Small scale distribution of linear patterns of primes}
\author[M. Pandey]{Mayank Pandey}
\address{Department of Mathematics, Princeton University, Princeton, NJ 08540}
\email{mayankpandey9973@gmail.com}
\author[K. Woo]{Katharine Woo}
\address{Department of Mathematics, Princeton University, Princeton, NJ 08540}
\email{khwoo@princeton.edu}
\begin{document}
\begin{abstract}
    Let $\Psi$ be a system of linear forms with finite complexity. In their seminal paper, Green and Tao showed the following prime number theorem for values of the system $\Psi$: $$\sum_{x\in [-N,N]^d} \prod_{i=1}^t \charf{\mathcal{P}}(\psi_i(x)) \sim \frac{(2N)^d}{(\log N)^t} \prod_{p} \beta_p,$$
    where $\beta_p$ are the corresponding local densities. In this paper, we demonstrate limits to equidistribution of these primes on small scales; we show the analog to Maier's result on primes in short intervals. In particular, we show that for all $\lambda > 1$, there exist $\delta_\lambda^\pm > 0$ such that for $N$ sufficiently large, there exist boxes $B^\pm\subset [-N, N]^d$ of sidelengths at least 
    $(\log N)^\lambda$ such that
    $$\sum_{x\in B^+} \prod_{i=1}^t \charf{\mathcal{P}}(\psi_i(x)) > (1+\delta^+) 
    \frac{\vol(B^+)}{(\log N)^t} \prod_{p}\beta_p,$$
    $$\sum_{x\in B^-} \prod_{i=1}^t \charf{\mathcal{P}}(\psi_i(x)) < (1-\delta^-) 
    \frac{\vol(B^-)}{(\log N)^t} \prod_{p}\beta_p.$$
\end{abstract}

\maketitle
\section{Introduction}
In 1936, Cram\'er introduced a probabilistic model for predicting features of the primes. 
In this model, $\charf{n\text{ is prime}}$ is modeled by 
a random variable $X_n$ with $\PP(X_n = 1) = 1 - \PP(X_n = 0) = 1/\log n$.
We have that almost surely 
$$\lim_{X\to\infty} \frac{\sum_{n\le X} X_n}{\sum_{n\le X} \charf{n\text{ prime}}} = 1.$$
In particular, Cram\'er's model matches the Prime Number Theorem.
It is interesting to test Cram\'er's heuristic as a method for modeling finer aspects of the
distribution of the primes. 
Assuming the Riemann hypothesis, it is known that  
Cram\'er's model matches the truth for primes in intervals of the shape $[X, X + X^{1/2 + \eps}]$.
For $\lambda > 2$, Cram\'er's model suggests that 
\begin{equation*}
\sum_{X < n\le X + (\log X)^\lambda} \charf{n \text{ is prime}} = (1 + o(1))\frac{(\log X)^\lambda}{\log X}
\end{equation*}
since 
$$\sum_{X < n\le X + (\log X)^\lambda} X_n 
= (1 + o(1))\frac{(\log X)^\lambda}{\log X}$$
almost surely.
It turns out that the naive conjecture described above is false, and
\begin{thm}[Maier, \cite{Maier}]\label{thm: Maier original result}Fix $\lambda>1$ and set $H=(\log X)^{\lambda}$. Then, the following holds:
$$\limsup_{X\rightarrow\infty} \frac{\pi(X+H)-\pi(X)}{H/\log X} >1,$$ and $$\liminf_{X\rightarrow \infty} \frac{\pi(X+H)-\pi(X)}{H/\log X} <1.$$
\end{thm}

 Following Maier's remarkable result, similar results on the inequidistribution of other 
 arithmetic sequences in short intervals were obtained by Balog-Wooley \cite{BalogWooley} on numbers representable as sums of two squares, and Tongsomporn and
Steuding \cite{Beatty} on primes in Beatty sequences. Later, Maynard \cite{Maynard} combined Maier's methods with sieve theory to push these results further for prime numbers and numbers representable as the sum of two squares. Additionally, Thorne  \cite{Thorne} has shown the
function field analogue of Theorem \ref{thm: Maier original result}. 

Granville and Soundararajan \cite{GranvilleSound} generalized Maier's work and showed that a 
more general class of arithmetic sequences will be poorly distributed in either short intervals
or short arithmetic progressions. In particular, they demonstrated that this phenomenon is not
specific to the primes. 

Friedlander and Granville \cite{FriedlanderGranville} use Maier's method show the analogue of
Maier's result for short arithmetic progressions. 
Specifically, writing $\pi(X,q,a) = \#\{p\leq X: p\equiv a \mod q, p\text{ prime}\}$, it is
shown that for fixed $A > 2$, large $X$ there exists $q_\pm \in [X,2X]$ such that
$$\pi(X(\log X)^A, q_+, 1) > (1+\delta_A) \frac{X}{\varphi(q_+)(\log X)^{A-1}},$$ 
$$\pi(X(\log X)^A, q_-, 1)< (1-\delta_A) \frac{X}{\varphi(q_-)(\log X)^{A-1}}.$$
There has been further work on these lines by Friedlander, Granville, Hildebrand, and Maier
\cite{FGHM}.


Before we state our general result, we shall discuss some special cases of our main theorem. 
In \cite{Vinogradov}, Vinogradov shows for odd $N$ that  
\[
\#\{p_1,p_2,p_3\in [0,N]: p_1+p_2+p_3 = N\} = \frac{\mf S(N)}{2}\frac{N^2}{(\log N)^3},
\]
where 
\[
    \mf S(N) = \prod_{p | N}\bigg(1 - \frac{1}{(p - 1)^2}\bigg)\prod_{p\nmid N} 
    \bigg(1 + \frac{1}{(p - 1)^3}\bigg).
\]
One might desire to understand the distribution of pairs $(n_1, n_2)$ for which 
$n_1, n_2, N - n_1 - n_2$ are all prime. By the above result, an analogue of 
Cram\'er's model might be to model $\charf{n_1, n_2, N - n_1 - n_2\text{ prime}}$ by 
a random variable $X_{n_1,n_2}$ with 
\[
    \PP(X_{n_1,n_2} = 1) = 1 - \PP(X_{n_1, n_2} = 0) =  \frac{\mf S(N)}{2(\log N)^3}
\]
Unconditionally, Zhan \cite{Zhan} has an asymptotic the count of $(n_1, n_2)$ such that $n_1,  n_2, $ and $N - n_1 - n_2$ 
are prime, and that the $n_i$ are restricted to appropriate short intervals of 
length at least $N^{\frac{5}{8}}(\log N)^c$ for some $c > 0$.
This analogue of Cram\'er's model would suggest that one would be able to count such
$n_i$ in short intervals as short as $(\log X)^\lambda$ for $\lambda > 2$.

A special case of our main theorem provides an answer in the negative to the this 
Cram\'er-type model provides. Specifically, we show:
\begin{samepage}
\begin{corollary}
    Let $N$ be a sufficiently large odd number and $\lambda>\frac{3}{2}$. 
    Let $N\ll X\le N/6, H = (\log X)^\lambda$. 
    There exist $\delta_\lambda^+, \delta_\lambda^- > 0$ such that for $X$ sufficiently large, there exist $x^\pm, y^\pm \in [X,2X]$ such that 
    $$\#\{p_1 \in [x^+, x^+ +H], p_2 \in [y^+, y^++H]: N-p_1-p_2 \textrm{ prime}\}\geq (1+\delta_\lambda^+) \mf S(N)\frac{H^2}{(\log X)^3},$$
    $$\#\{p_1 \in [x^-, x^- +H], p_2 \in [y^-, y^-+H]: N-p_1-p_2 \textrm{ prime}\}\leq (1-\delta_\lambda^-) \mf S(N)\frac{H^2}{(\log X)^3}.$$
\end{corollary}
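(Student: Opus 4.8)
The plan is to deduce this corollary from the main theorem by specializing the system of linear forms. Here the relevant system is $\Psi = (\psi_1, \psi_2, \psi_3)$ on $\mathbb{Z}^2$ given by $\psi_1(x_1,x_2) = x_1$, $\psi_2(x_1,x_2) = x_2$, and $\psi_3(x_1,x_2) = N - x_1 - x_2$. These three forms are pairwise linearly independent (no two are rational multiples of one another), so the system has finite complexity; in fact its Cauchy--Schwarz complexity is $1$. The first step is therefore to check that this $\Psi$ satisfies the hypotheses of the main theorem, and to identify the product of local densities $\prod_p \beta_p$ with $\mathfrak{S}(N)/2$. The $\beta_p$ count solutions modulo $p$ to the condition that all three forms avoid $0 \bmod p$: for $p \nmid N$ one gets $\beta_p = \frac{p^2}{(p-1)^3}\cdot\frac{\#\{(a,b) \bmod p: a, b, N-a-b \not\equiv 0\}}{p^2}$, and a short count recovers the Euler factors in $\mathfrak{S}(N)$; the archimedean factor and the normalization by the volume $\vol(B^\pm)$ produce the $1/2$ (the region where $\psi_1,\psi_2 \ge 0$ and $\psi_1 + \psi_2 \le N$ inside the relevant box has the right shape, and after rescaling to side length $H$ each form is positive). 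This bookkeeping — matching the singular series and constants exactly — is the first thing to nail down.

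Next I would set up the scaling so that the main theorem applies. The main theorem produces boxes $B^\pm \subset [-M, M]^2$ of side length at least $(\log M)^{\lambda'}$ for any $\lambda' > 1$; I want boxes of side length $H = (\log X)^\lambda$ with $\lambda > 3/2$ living near $(X, X) \in [-N, N]^2$ (note $X \le N/6$, so on such a box all three of $x_1$, $x_2$, $N - x_1 - x_2 \approx N - 2X$ are positive and of size $\asymp N$, hence $\asymp X$, making $\log$ of each form comparable to $\log X$). The constraint $\lambda > 3/2$ rather than $\lambda > 1$ presumably arises because the main theorem is stated with ambient scale $N$ but we want the error terms to be negligible relative to the main term $H^2/(\log X)^3$; one needs $(\log N)^\lambda$ to dominate the savings in the main theorem's error term, and tracking the exponent through the three forms (each contributing a $\log$) forces $\lambda$ past $3/2$. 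So the second step is a change of variables $x = (X,X) + y$ with $y$ ranging over a box of side $\asymp H$, verifying that the main theorem's conclusion transfers with $\log N$ replaced by $\log X$ at the cost of absorbing a $(1 + o(1))$ into $\delta_\lambda^\pm$.

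The third step is purely notational: the main theorem gives a single box $B^\pm$ in $\mathbb{R}^2$, but the corollary wants a product of intervals $[x^\pm, x^\pm + H] \times [y^\pm, y^\pm + H]$. Since the main theorem's boxes are axis-parallel products of intervals anyway (a ``box'' of given side lengths), this is immediate: take $x^\pm, y^\pm$ to be the lower endpoints of the two sides of $B^\pm$, and replace the side length by exactly $H$ (shrinking a box only costs a bounded factor, which again can be absorbed into the $\delta$'s, or one simply notes the main theorem already allows side length ``at least'' $(\log N)^{\lambda'}$ and one can take it equal). The main obstacle, I expect, is not any of these reductions individually but the verification that the main theorem as proved genuinely covers a system with a \emph{fixed} inhomogeneous shift $N$ that is itself growing — i.e. that the dependence on $N$ in the error terms is uniform enough that we may take $X$ (and hence $\log X$) as the effective scale while $N \asymp X$. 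If the main theorem is stated for fixed $\Psi$, one must check its proof is uniform in the size of the coefficients/constant term in the regime $N \ll X \le N/6$, $\|\Psi\| = O(N)$; this uniformity, together with getting the constant $\mathfrak{S}(N)/2$ exactly right, is where the real work lies.
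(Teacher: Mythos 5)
Your overall route --- specializing Theorem \ref{thm: main result} to the system $\Psi_N=(\psi_1,\psi_2,\psi_3)$ with $\psi_1=x_1$, $\psi_2=x_2$, $\psi_3=N-x_1-x_2$ --- is exactly how the paper intends the corollary to be read: no separate proof is given there, only the remark that it could alternatively be obtained from the circle method combined with Maier's matrix method. However, your central bookkeeping step is wrong: for this system one has $\prod_p\beta_p=\mathfrak{S}(N)$ exactly, \emph{not} $\mathfrak{S}(N)/2$. Indeed
\[
\beta_p=p^{-2}\Big(\tfrac{p}{p-1}\Big)^{3}\,\#\{(a,b)\bmod p:\ a,\ b,\ N-a-b\not\equiv 0\}
\]
(one factor $p/(p-1)$ per form; your displayed formula drops a factor of $p$), and the count equals $(p-1)(p-2)$ when $p\mid N$ and $(p-1)^2-(p-2)$ when $p\nmid N$, giving $\beta_p=1-(p-1)^{-2}$ and $\beta_p=1+(p-1)^{-3}$ respectively, hence $\prod_p\beta_p=\mathfrak{S}(N)$. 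The $1/2$ in Vinogradov's asymptotic is purely archimedean --- it is the area of the simplex $\{n_1,n_2\ge 0,\ n_1+n_2\le N\}$ relative to $N^2$ --- and it plays no role here, because the hypothesis $X\le N/6$ forces $N-n_1-n_2\asymp N$ on every box in question, so every lattice point of the box is archimedeanly admissible. This is not a harmless normalization: if you insert $\mathfrak{S}(N)/2$ into the main theorem, the excess inequality you deduce is weaker by a factor of $2$ than the first display of the corollary, so that half of the statement does not follow; with the correct value $\mathfrak{S}(N)$ the deduction is immediate.

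The rest of your plan is sound and matches the intended argument: the homogeneous parts $x_1,x_2,-x_1-x_2$ are pairwise independent, and the complexity is indeed $1$; the boxes produced by Theorem \ref{thm: main result} are already axis-parallel products of intervals with corners in $[X,2X]^2$ and side $H$; and since $N\asymp X$, each $\psi_i$ has size $\asymp X$ on these boxes, so $\log\psi_i=(1+o(1))\log X$ and the discrepancy is absorbed into $\delta_\lambda^\pm$. Two smaller corrections: the threshold $\lambda>\tfrac32$ has nothing to do with error terms in the main theorem (which applies for all $\lambda>1$); it is simply $t/d=3/2$, the exponent below which the expected count $H^2/(\log X)^3$ stays bounded and the statement is vacuous. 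Finally, you are right that the one genuine point to verify is uniformity in $N$, since the system varies with $X$ through its constant term; the paper is silent on this, but the error terms in the Green--Tao machinery and in Propositions \ref{prop:smooth_ap} and \ref{prop:rough_patterns} depend only on the homogeneous coefficients (the constant term enters only through congruence data and the local factors), so the proof of Theorem \ref{thm: main result} is uniform in the regime $N\ll X\le N/6$, and making that remark explicit is all that is required.
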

\end{samepage}

This result can be shown using a combination of Maier's matrix method and the circle method. 
Our main theorem, however, can be applied to a broader class of equations that are beyond the reach of the circle
method. For instance, we can instead consider primes in four term arithmetic progressions, and show 
similar irregularities in the distribution of $(x, y)$ such that $x, x + y, x + 2y, x + 3y$ are all prime.

Next, we introduce the general setting. Let $\Psi = (\psi_1,\hdots, \psi_t):\Z^d\rightarrow \R$ be a system of linear forms.  
\begin{defn}\label{def: complexity}
The \textit{complexity} of a system of forms $\Psi = (\psi_1,\hdots,\psi_t)$ is the smallest integer $s\geq 0$ such that $\{\tilde{\psi_j}: [t]\backslash \{i\}\}$ can be partitioned into $s+1$ classes such that $\tilde{\psi_i}$ is not contained in the affine linear span of any of the classes, where $\Tilde{\psi}$ denotes the homogeneous part of $\psi$. 
\end{defn}
We will restrict ourselves to systems of finite complexity, which measures the linear independence of the homogeneous parts of the forms.
In \cite{GT2}, Green and Tao show that a system has finite complexity if and only if the $\tilde{\psi_i}$ are pairwise pairwise linearly independent. In particular, twin primes and Goldbach's conjecture both have infinite complexity; as a result, we are not able to say anything about such primes using this technology.

In their seminal work, Green and Tao proved the Hardy-Littlewood conjecture for linear patterns with finite complexity. Let us take $\Lambda(n) = \Lambda(|n|)$ for $n\in\ZZ\setminus\set{0}$ and $\Lambda(0) = 0$.

\begin{thm}[Green and Tao, \cite{GT2}]\label{thm: green tao}
Let $\Psi = (\psi_1,...,\psi_t)$ be a system of finite complexity. Then the following asymptotic holds: 
$$ \sum_{\bx \in [-X,X]^d} \prod_{i=1}^t \Lambda(\psi_i(\bx)) = (2X)^d\prod_{p} \beta_p + o_{\Psi}(X^d),$$
where $$\beta_p = p^{-d} \sum_{\bx\in \mathbb{F}_p^d} \prod_{i=1}^t \left(\frac{p}{p-1} \cdot \charf{(\psi_i(\bx),p)=1}\right).$$
\end{thm}

We note that this asymptotic is not exactly what Cram\'er's model predicts, but is properly corrected by the local factors $\beta_p$ for the system $\Psi$.
A random model for the primes along the lines of Cram\'er, correcting for these 
local factors was found by Granville \cite{Granville}. 
In \cite{MSTTHOFU}, Matom\"aki, Shao, Tao and Ter\"av\"ainen, have confirmed that the asymptotic holds in short boxes of length $H\geq X^{5/8+\eps}$: 
\begin{thm}[Matom\"aki, Shao, Tao, and Ter\"av\"ainen, \cite{MSTTHOFU}]\label{thm: MSTT}
Let $X^{5/8+\eps}\leq H \leq X^{1-\eps}$ for some $\eps>0$. Let $\Psi = (\psi_1,...,\psi_t):\Z^d\rightarrow\Z$ have finite complexity. Then we have that $$\sum_{\bx\in [X,X+H]^d}\prod_{i=1}^t \Lambda(\psi_i(\bx)) = H^d \prod_{p} \beta_p + o_{\Psi}(H^d).$$
\end{thm}

We study the limits to equidistribution with the following result.  
\begin{thm}\label{thm: main result}
Let $\lambda>1$, $X$ sufficiently large, and $H=(\log X)^{\lambda}$. Then, there exist $\delta_\lambda^+, \delta_\lambda^- > 0$ such that there exists $\bx^\pm \in [X,2X]^d$ such that $$\sum_{\bx\in \prod_{i=1}^d [x^+_i,x^+_i + H]} \prod_{i=1}^t\charf{\psi_i(\bx) \textrm{ is prime}}\ge(1 + \delta_\lambda^+)\frac{H^d}{(\log X)^t}\prod_{p} \beta_p,$$
and 
$$\sum_{\bx\in \prod_{i=1}^d [x^-_i,x^-_i + H]} \prod_{i=1}^t\charf{\psi_i(\bx) \textrm{ is prime}}\le(1 - \delta_\lambda^-)\frac{H^d}{(\log X)^t}\prod_{p} \beta_p.$$
\end{thm}

\begin{remark}
We note that we expect primes in these patterns to be equidistributed in large enough boxes because it is a linear system. If instead we look at quadratic polynomials, then there are large cone regions where there are no integer solutions. 
For example, there are no primes of the form $m^2 + n^2$ for $0 < m, n\le X$ with 
$0 < \arg(m + ni) < \frac{1}{X}$, for there are no integral points in the region. 
However, equidistribution would predict in most sectors of angle 
$\asymp\frac{1}{X}$, there are $\asymp\frac{X}{\log X}$ primes. 

These irregularities 
are due to the structure of the lattice points for the quadratic in certain sectors. In our 
linear case, there are no such obstructions to regularity; the number of lattice points in 
$\prod_i [x_i, x_i + H]$ remains $(1 + o(1))H^d$ for all $x_i$. Thus, the relative abundance or 
sparsity of linear patterns of primes is therefore due to the structure of the primes and 
not immediately due to the fact that they sit inside the integers. 
\end{remark}

\begin{remark}
We expect to be able to show the analogue of our result in short arithmetic progressions
rather than short intervals \`a la Friedlander and Granville \cite{FriedlanderGranville}. 
\end{remark}

\begin{remark}
    For a fixed $\lambda$ and a range $[X,2X]^d$, we note that if worked out, we can get $\delta_{\lambda}^{\pm}$ to grow exponentially with $t$. 
    In particular, as $t$ grows, we get counts in small boxes that 
    deviate by an arbitrarily large constant factor from the expected count. We 
    are thus able to find deviations that are in a sense more severe than those of 
    Maier.
\end{remark}

We end this section with a brief outline of the rest of the paper. 
In \S2, we will set up the main results from which 
our main theorem follows; in \S2.3 we complete the proof of Theorem \ref{thm: main result} assuming
these two results, to whose proofs the rest of the paper will be devoted. In \S3, we set up the
machinery surrounding nilsequences we shall require in our proof . What remains is then reduced 
to certain estimates sums of nilsequences along primes in arithmetic progressions with moderately 
large moduli, which we show in \S4; it is here where new ideas are brought in.   


\section{Setup} \label{sec:setup}
\subsection{Notation}
In this section, we will specify notation that will be used for the remainder of the paper. We fix a
system of form $\Psi = (\psi_1,...,\psi_t):\Z^d \rightarrow \R$ of finite complexity $s$. 
We write $[N] = \{1,2,...,N\}.$ 
Associated to these forms are the local densities $\beta_p$, as defined in Theorem \ref{thm: green tao}.

Throughout, $p$ always denotes a prime.
Write $P(z) = \prod_{p\leq z} p$. We will use $\mc P$ to denote the set of prime numbers.

We let $\omega:\R_{\geq 1}\to\R_{\geq 1}$ be the Buchstab function, i.e. the unique continuous
function such that
\begin{align*} 
\omega(u) &= \frac{1}{u},\hspace{7pt}  1\leq u\leq 2\\ 
\frac{d}{du}(u\omega(u)) &= \omega(u - 1),\hspace{7pt}  u\ge 2
\end{align*} 
Throughout, $\gamma$ shall denote the Euler-Mascheroni constant.

\subsection{Existence of dutiful moduli} \label{subsec:good_moduli}
\begin{defn}\label{def: dutiful}
A modulus $Q$ is \textit{dutiful} if for all $q\le(\log Q)^{\frac{1}{10}}$, all of the
Dirichlet characters $\chi\mod{(qQ)^{\floor{20s\log\log Q}}}$ satisfy that for any $\beta\in [1/2,1]$ such that 
$L(\beta, \chi) = 0$, the following holds:
\[
    \beta\le 1 - \frac{1}{(\log Q)^2}.
\]

\end{defn}
\begin{remark}
    We note that the choice of $1/10$ here is somewhat arbitrary. It is chosen for notational simplicity and to simply have fewer variables to keep track of through the proof.
\end{remark}

\begin{proposition}\label{prop:good_moduli_exist}
There arbitrarily large $z$ such that $P(z)$ is a dutiful modulus.
\end{proposition}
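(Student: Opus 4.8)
The plan is to show that the primorial $P(z)=\prod_{p\le z}p$ is dutiful for arbitrarily large $z$ by invoking a zero-density / log-free zero-free region result, and then arranging the parameters so that the required zero-free region holds for all the relevant moduli simultaneously. The key point is that the modulus $(qQ)^{\lfloor 20s\log\log Q\rfloor}$ with $Q = P(z)$ and $q\le(\log Q)^{1/10}$ has, by the prime number theorem, $\log Q = \sum_{p\le z}\log p = (1+o(1))z$, so $\log\log Q = (1+o(1))\log z$, and the modulus in question is of size at most $\exp(O(z\log z\cdot\log z)) = \exp(z^{1+o(1)})$. We need that no Dirichlet character to such a modulus has a zero $\beta$ with $1-\frac{1}{(\log Q)^2}\le \beta\le 1$; since $\log Q\asymp z$, this is the region within $z^{-2+o(1)}$ of the line $\Re s = 1$.

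\medskip

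First I would recall the classical zero-free region for Dirichlet $L$-functions, together with the treatment of the possible Siegel zero: there is an absolute constant $c>0$ such that for every modulus $m$, $\prod_{\chi\bmod m}L(s,\chi)$ has at most one zero $\beta$ (real, corresponding to a real character) with $\beta > 1 - \frac{c}{\log m}$, and by Siegel's theorem $\beta < 1 - C(\eps)m^{-\eps}$ for any $\eps>0$. The obstruction to getting this directly for $m = (qP(z))^{\lfloor 20 s\log\log P(z)\rfloor}$ is that $\log m \asymp z\log z$, so the standard zero-free region only gives $\beta\le 1 - \frac{c}{z\log z}$, which is far weaker than the required $\beta\le 1-\frac{1}{z^{2+o(1)}}$ — the standard region alone does not suffice, and the Siegel zero (were it to exist for the relevant modulus) would be fatal. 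So instead I would argue as follows: it suffices to rule out zeros in the box $\beta\ge 1-\frac{1}{(\log Q)^2}$, $|t|$ small, but in fact we only care about $t=0$ in Definition~\ref{def: dutiful} (the condition is stated for $\beta\in[1/2,1]$, i.e. on the real segment). Actually the cleanest route is a \textbf{pigeonhole / counting} argument: among all $z$ in a dyadic range $[Z,2Z]$, the set of ``bad'' $z$ — those for which some character to some modulus $(qP(z))^{\lfloor 20s\log\log P(z)\rfloor}$ with $q\le(\log P(z))^{1/10}$ has a zero too close to $1$ — is sparse, because by the Landau–Page argument exceptional real zeros repel each other, and any two distinct bad $z$ would force two nearby exceptional zeros of $L$-functions to moduli built from disjoint sets of primes (the primorials $P(z)$ for different $z$ differ), which is impossible. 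More precisely, the classical statement (Page's theorem) is that real zeros $\beta_1>1-\frac{c}{\log m_1}$, $\beta_2 > 1-\frac{c}{\log m_2}$ of $L(s,\chi_1)$, $L(s,\chi_2)$ for primitive characters $\chi_1\ne\chi_2$ satisfy $\min(\beta_1,\beta_2)\le 1-\frac{c'}{\log(m_1m_2)}$; iterating, at most one modulus $m\le M$ can support an exceptional zero with $\beta>1-\frac{c'}{\log M}$, hence in a suitable range of $z$ all but at most one value of $z$ is such that $P(z)$ (and the associated thickened moduli) has no real zero in the relevant region.

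\medskip

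Concretely, the steps I would carry out are: (1) fix a large parameter $Z$ and restrict to $z\in[Z,2Z]$ a prime (so the primorials $P(z)$ are genuinely distinct and $\log P(z) = (1+o(1))z$ uniformly); (2) for each such $z$, the relevant composite modulus $m(z,q) := (qP(z))^{\lfloor 20s\log\log P(z)\rfloor}$ satisfies $\log m(z,q)\le z(\log z)^2$ say, for $Z$ large, uniformly in $q\le(\log P(z))^{1/10}\le z^{1/10}$; (3) the radical (squarefree kernel) of $m(z,q)$ is $\prod_{p\le z}p$ times at most the primes dividing $q$, all of which are $\le z^{1/10}<z$, so in fact the radical is exactly $P(z)$ for all admissible $q$ — this is the crucial observation that makes the moduli for different $z$ have disjoint ``new'' prime support; (4) apply the repulsion of exceptional zeros (Landau–Page) across the distinct radicals $P(z)$, $z\in[Z,2Z]$, to conclude that at most $O(1)$ values of $z$ in this range can have an exceptional real zero $\beta > 1 - \frac{c}{\log(z(\log z)^2)} \ge 1 - \frac{1}{(\log P(z))^2}$ for $Z$ large (using $\log P(z)\asymp z$ so $(\log P(z))^{-2}\asymp z^{-2}\ll (z(\log z)^2)^{-1}$ is false — wait, we need the reverse; let me restate this correctly below); hence choosing $z\in[Z,2Z]$ avoiding these $O(1)$ bad values, and noting that complex zeros this close to the $1$-line are already excluded by the standard zero-free region combined with a density estimate, gives a dutiful $P(z)$; (5) let $Z\to\infty$ to get arbitrarily large such $z$.

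\medskip

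The main obstacle — and the point I flagged as needing a correct restatement — is that $\frac{1}{(\log Q)^2}$ is \emph{smaller} than $\frac{c}{\log m(z,q)}$, i.e. the forbidden region in Definition~\ref{def: dutiful} is \emph{narrower} than the classical zero-free region, so one cannot simply quote a zero-free region for the big modulus; one genuinely needs either Siegel's theorem (to push the single possible exceptional zero past $1 - z^{-2}$, which requires the exceptional modulus to be bounded or at least $z^{o(1)}$ — not the case here) or a zero-\emph{density} estimate (Gallagher's log-free density theorem) which bounds the number of zeros of all $L(s,\chi)$, $\chi\bmod m$, in the strip $\Re s\ge \sigma$ by $m^{O(1-\sigma)}$ up to logs; plugging $\sigma = 1 - \frac{1}{(\log Q)^2}$ and $m = m(z,q)$ of size $\exp(z^{1+o(1)})$ gives a bound $\exp(z^{1+o(1)}\cdot z^{-2+o(1)}) = \exp(z^{-1+o(1)}) \to 1$, i.e. \emph{no} zeros at all in that region for $Z$ large, for every $q$ — this is in fact the clean way to finish, and it sidesteps the exceptional-zero issue entirely provided $s$ (and hence the exponent $20s\log\log Q$) is fixed. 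So the real plan is: \textbf{apply Gallagher's log-free zero-density theorem} to the finitely-many moduli $m(z,q)$, $q\le(\log P(z))^{1/10}$, observe that the total count of exceptional zeros it produces is $o(1)$ (hence $0$) once $Z$ is large because the width $(\log Q)^{-2}$ beats the $m^{O(\cdot)}$ growth, and then no special construction is needed — \emph{every} sufficiently large primorial $P(z)$ is dutiful. The hard part is checking uniformity in $q$ and in the exponent $\lfloor 20s\log\log Q\rfloor$, but since there are only $\le (\log Q)^{1/10}$ values of $q$ and the exponent grows only like $\log\log Q$, these contribute negligible factors and the argument closes.
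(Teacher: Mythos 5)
Your proposal does not close, and the failure is at exactly the point the proposition is really about: the possible exceptional (Siegel) zero. Your final ``clean'' route via Gallagher's log-free zero-density theorem cannot work: that estimate reads $N(\sigma,T)\ll (mT)^{c(1-\sigma)}$ with an absolute implied constant, so with $\sigma=1-(\log Q)^{-2}$ and $\log m\asymp z\log z$ the right-hand side is $O(1)$, not $o(1)$; a bound of $O(1)$ can never certify that there are \emph{no} zeros, and indeed no unconditional density estimate excludes a single real zero very close to $1$. Relatedly, your conclusion that \emph{every} sufficiently large primorial is dutiful is strictly stronger than the proposition (which claims only arbitrarily large $z$) and is out of reach: the relevant conductor can be as large as $\exp(O(z\log z))$, so Siegel's theorem gives only $1-\beta\gg_\eps\exp(-\eps z\log z)$, nowhere near the needed $z^{-2}$. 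Note also that your inequalities are tangled in the opposite direction from what you state: since $(\log Q)^{-2}\asymp z^{-2}\ll c/(z\log z)\asymp c/\log m$, the classical zero-free region for the big modulus already covers the dutiful window for all non-exceptional zeros; the one and only obstruction is the possible exceptional real zero, which your argument never removes.

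Your Landau--Page/pigeonhole sketch over $z\in[Z,2Z]$ also rests on a false premise: the moduli are nested, not of disjoint prime support ($P(z_1)\mid P(z_2)$ for $z_1\le z_2$, and likewise the exponent $\lfloor 20s\log\log P(z)\rfloor$ only grows), so a single exceptional primitive character of modest conductor is induced to the auxiliary modulus for \emph{every} large $z$ and could spoil the entire dyadic range at once; there is no repulsion to exploit between different values of $z$. The paper's proof treats the exceptional zero by Maier's device instead: if $P(z_1)$ is not dutiful, the offending zero $\tilde\beta$ is an exceptional zero for the single modulus $M(z_1)=\bigl(P(z_1)\lceil\log^{1/10}P(z_1)\rceil!\bigr)^{\lfloor 20s\log\log P(z_1)\rfloor}$ (this works because $\log M(z_1)=o\bigl((\log P(z_1))^2\bigr)$); since $\log M(z)$ grows without bound, one can choose $z\ge z_1$ with $\frac{c_1/2}{\log M(z)}<1-\tilde\beta\le\frac{c_1}{\log M(z)}$, whereupon Landau--Page forces $\tilde\chi$ to be the unique exceptional character mod $M(z)$, every other character is zero-free on $\bigl[1-\frac{c_1/2}{\log M(z)},1\bigr]\supset\bigl[1-(\log P(z))^{-2},1\bigr]$, and $\tilde\beta$ itself now lies outside the dutiful window. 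That step --- growing $z$ until the one bad zero exits the shrinking window, rather than trying to rule it out --- is the idea missing from your proposal.
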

\begin{proof}
We proceed similarly to the proof of Lemma 1 in \cite{Maier2}.
Write
\[
    M(z) = (P(z)(\ceil*{\log^{1/10} P(z)}!))^{\floor{20s\log\log P(z)}}.
\]
 We record
$\log M(z) = (1 + o(1))20s z\log z$. Take some $z_1$ sufficiently large.
If $P(z_1)$ is dutiful, we are done. Otherwise 
(since $z$ is sufficiently large and $\log M(z) = o(\log^2 P(z))$), there exists
some $\tilde\chi \mod M(z_1)$ with $L(\tilde\beta, \tilde\chi) = 0$
and 
\[
    1 - \tilde\beta\le\frac{c_1}{\log M(z_1)}
\]
for some sufficiently small $c_1$.
Since $z$ is sufficiently large and 
$\log M(z) = (1 + o(1))20s z\log z$, there exists 
$z\ge z_1$ such that 
\[
     \frac{c_1/2}{\log M(z)} < 1 - \tilde\beta\le \frac{c_1}{\log M(z)}
\]
Therefore, $\tilde\chi$ induced $\mod M(z)$ is still an exceptional character 
to modulus $M(z)$.

It follows by Theorem 5.28 in \cite{IK} (Landau/Page's theorem) that there are no other
exceptional characters $\mod M(z)$
So, for all $\chi\mod M(z)$, we have that 
if 
\[
    1 - \beta\le\frac{c_1/2}{\log M(z)},
\]
then $L(\beta,\chi)\neq 0.$
The desired result then follows upon noting that for large $z$,
\[
    \frac{c_1/2}{\log M(z)}\ge\frac{1}{(\log P(z))^2}.
\]
\end{proof}

\subsection{Main estimates}
To prove Therorem \ref{thm: main result}, we shall rely on the following two propositions, from
which we shall prove the Theorem \ref{thm: main result} in the next subsection.

\begin{defn}
For a given modulus $Q$, $\ba\in (\ZZ/Q\ZZ)^d$ is called \textit{admissible} if 

\noindent $(\psi_i(\ba), Q) = 1$ for all $i\le t$.
\end{defn}

\begin{proposition}\label{prop:smooth_ap}
Suppose that $Q = P(z)$ is dutiful, and fix $\eps > 0$. 
Then, for $X\ge \exp((\log Q)^{50})$, admissible $\ba\in (\ZZ/Q\ZZ)^{d}$, we have
\[
    \sum_{\substack{{\bf n}\in [X,2X]^d\\ {\bf n\equiv\bf a}(Q)}}\prod_{i\le t}\charf{\mc P}(\psi_i(n)) 
     = \frac{1}{\#\mc A}\frac{X^d}{(\log X)^t}\prod_{p}\beta_p (1+O(z^{-1})) + o\pfrc{X^d}{(\log X)^t}.
\]
Here, $\mc A\subset (\ZZ/Q\ZZ)^d$ is the set of admissible tuples $\mod Q$.
\end{proposition}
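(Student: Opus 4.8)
The plan is to convert the congruence-restricted prime count into a Green--Tao-type average for a rescaled system of linear forms, to which we apply the effective version of Theorem~\ref{thm: green tao} that is proved in \S3--\S4 using the dutifulness of $Q$. As a first step we pass to the von Mangoldt function: since $\ba$ is admissible and $Q=P(z)$, every $\psi_i(\bn)$ with $\bn\equiv\ba\ (Q)$ is coprime to $Q$, and the $\bn\in[X,2X]^d$ for which some $\psi_i(\bn)$ is a proper prime power or satisfies $|\psi_i(\bn)|\le X^{1-1/z}$ number only $O_\Psi(X^{d-1/z})$ and hence contribute $o(X^d/(\log X)^t)$; for the remaining $\bn$ one has $\log|\psi_i(\bn)|=(\log X)(1+O(z^{-1}))$. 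So, up to a multiplicative error $1+O(z^{-1})$ and an additive error $o(X^d/(\log X)^t)$, it suffices to evaluate $S:=\sum_{\bn\in[X,2X]^d,\ \bn\equiv\ba\,(Q)}\prod_{i\le t}\Lambda(\psi_i(\bn))$. Writing $\bn=Q\mathbf{m}+\ba$ with $\ba$ taken in $[0,Q)^d$, we have $\psi_i(Q\mathbf{m}+\ba)=Q\tilde\psi_i(\mathbf{m})+\psi_i(\ba)=:\phi_i(\mathbf{m})$, with $\mathbf{m}$ ranging over a box $B$ of side $X/Q+O(1)$; the system $\Phi=(\phi_1,\dots,\phi_t)$ has homogeneous parts $Q\tilde\psi_i$, hence the same pairwise linear independence and complexity $s$, while all its coefficients have size $O_\Psi(Q)=X^{o(1)}$. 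Thus, up to negligible boundary error, $S=\sum_{\mathbf{m}\in B}\prod_{i\le t}\Lambda(\phi_i(\mathbf{m}))$.

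The crux is an effective form of Theorem~\ref{thm: green tao} for $\Phi$, namely
\[
  \sum_{\mathbf{m}\in B}\prod_{i\le t}\Lambda(\phi_i(\mathbf{m}))\;=\;\vol(B)\prod_p\beta_p^{(\Phi)}\bigl(1+O(z^{-1})\bigr)\;+\;o\!\left(\frac{X^d}{Q^d}\right),
\]
uniformly over admissible $\ba$ and dutiful $Q=P(z)$ with $X\ge\exp((\log Q)^{50})$, where $\beta_p^{(\Phi)}$ are the local densities of $\Phi$. Following Green--Tao, one reduces this --- via the generalized von Neumann theorem and the inverse theorem for the Gowers norms, i.e.\ the nilsequence machinery assembled in \S3 --- to bounding correlations of (a suitable Möbius/von Mangoldt decomposition of) $\Lambda$ with nilsequences of degree at most $s$. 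The new point is that, since $\Phi$ has coefficients of size $\asymp Q$, the ``major-arc'' inputs to this reduction are asymptotics for $\Lambda$ along arithmetic progressions to moduli that are powers $Q^{O(s\log\log Q)}$ of $Q$ (times a small auxiliary factor). Such moduli lie far outside the Siegel--Walfisz range $(\log X)^{O(1)}$, so Siegel--Walfisz cannot be quoted; instead dutifulness rules out an exceptional zero for precisely these moduli and supplies the zero-free region $\beta\le 1-(\log Q)^{-2}$. Because $\log X\ge(\log Q)^{50}$ swamps the resulting loss --- a bounded power of $\log Q$ in the error exponent, together with the factor $Q^d$ in the main term, costing only $d\log Q$ --- this weak zero-free region is enough to make these prime-in-progression estimates, and hence the whole asymptotic, effective. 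This step, carried out in \S4, is where I expect the real difficulty to lie.

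Finally one identifies the local factors. For $p\mid Q$, i.e.\ $p\le z$, we have $\phi_i(\mathbf{m})\equiv\psi_i(\ba)\not\equiv0\pmod p$ for every $\mathbf{m}$ by admissibility, so $\beta_p^{(\Phi)}=(p/(p-1))^t$; for $p\nmid Q$ the bijection $\mathbf{m}\mapsto Q\mathbf{m}+\ba$ of $\mathbb{F}_p^d$ identifies the local configurations of $\Phi$ and $\Psi$, so $\beta_p^{(\Phi)}=\beta_p$. Writing $\beta_p=\alpha_p(p/(p-1))^t$ with $\alpha_p=p^{-d}\#\{\bx\in\mathbb{F}_p^d:(\psi_i(\bx),p)=1\ \forall i\}$, and using $\#\mathcal{A}=Q^d\prod_{p\le z}\alpha_p$ (Chinese remainder theorem), we get
\[
  \vol(B)\prod_p\beta_p^{(\Phi)}\;=\;\bigl(1+O(z^{-1})\bigr)\frac{X^d}{Q^d}\prod_{p\le z}\Bigl(\frac{p}{p-1}\Bigr)^t\prod_{p>z}\beta_p\;=\;\bigl(1+O(z^{-1})\bigr)\frac{X^d}{\#\mathcal{A}}\prod_p\beta_p.
\]
Dividing by $(\log X)^t$ and combining the errors --- each either multiplicative of size $O(z^{-1})$ or additive of size $o(X^d/(\log X)^t)$ --- yields the proposition.
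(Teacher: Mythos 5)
Your outline follows the same route as the paper: pass to $\Lambda$ with a $1+O(z^{-1})$ loss, absorb the congruence $\bn\equiv\ba\ (Q)$ into the forms (equivalently, work with $\Lambda(Q\cdot+b)-Q/\varphi(Q)$ along the progression), run the transference/nilsequence machinery uniformly in $Q$, and use dutifulness to control the $L$-function input at moduli of size $(qQ)^{O(s\log\log Q)}$; your local-factor bookkeeping ($\beta_p^{(\Phi)}=(p/(p-1))^t$ for $p\le z$, $\beta_p^{(\Phi)}=\beta_p$ for $p>z$, and the identification with $\#\mathcal{A}$) is correct and matches the paper's computation. However, there is a genuine gap: the ``effective form of Theorem \ref{thm: green tao} for $\Phi$'' that you isolate as the crux \emph{is} the content of this proposition, and you assert it rather than prove it. In particular, you locate all of the new difficulty in the ``major arcs,'' but that is not where the work stops. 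On the equidistributed (``minor arc'') side, the standard Green--Tao factorization theorem and the M\"obius/nilsequence orthogonality of \cite{GT4} are not uniform in progressions with modulus $Q$ of size $\exp((\log X)^{\Theta(1)})$: the paper has to prove a new $Q$-adapted factorization (Theorem \ref{thm: Q-factor theorem}), in which the periodic part has period a power of $qQ$ rather than $O(\delta^{-1})$, and then rerun the Vaughan Type~I/Type~II analysis keeping track of $Q$ throughout (\S\ref{subsec:smooth minor arcs}); nothing in your sketch addresses this, and quoting the off-the-shelf machinery ``assembled in \S3'' would fail precisely because of the modulus size.

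On the periodic (``major arc'') side, the zero-free region supplied by dutifulness is necessary but not sufficient as you present it. The paper must first convert the correlation with a $(qQ)^d$-periodic nilsequence into Dirichlet character sums, bound the complete character-times-nilsequence sums nontrivially (Lemma \ref{lem:periodic_nilseq_char}, via Gowers norms of characters/Weil bounds) so that characters of large conductor can be discarded, and then handle the remaining roughly $\varphi((qQ)^d)$ characters with a \emph{log-free zero-density estimate} (Lemma \ref{lem:log_free_zero_density_est}); a character-by-character application of the zero-free region, which is what your ``the weak zero-free region is enough'' step amounts to, does not control the sum over all characters to such a large modulus. Finally, you also take for granted a pseudorandom majorant for $\Lambda$ on a progression with modulus this large; this is a quantitative input (the paper quotes \cite{TT21}, Proposition 8.4, which is why the exponent $50$ in $X\ge\exp((\log Q)^{50})$ appears) and needs to be cited or constructed, not assumed. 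So while your high-level plan and the arithmetic reductions are right, the proposal omits essentially all of the argument that makes the proposition true.
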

We remark that for almost all $Q\le X^{\frac{1}{3} - \eps}$, 
Ter\"av\"ainen and Shao \cite{TeravainenShao} have shown Proposition 
\ref{prop:smooth_ap}.

We also introduce an estimate for linear patterns of rough numbers. This result follows from 
general work of Matthiessen \cite[Theorem 2.1]{MatthiesenLinear}, 
though in our specific case of rough numbers, we expect 
that it should also follow with a little work from results on linear patterns of primes.
\begin{proposition}\label{prop:rough_patterns}
Fix $u> 1$, take $z$ large, and let $N = z^u$. Then, we have
\[
    \sum_{\substack{{\bf n}\in [N, 2N]^d}}\prod_{i\le t}\charf{(\psi_i(n), P(z))=1} 
    = (1 + o(1))\frac{\#\mc A}{P(z)^d}\left(e^\gamma\omega(u)\right)^t N^d,
\]
where $\mc A$ is the set of admissible tuples $\mod P(z)$.
\end{proposition}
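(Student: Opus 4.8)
The plan is to deduce Proposition \ref{prop:rough_patterns} from Matthiesen's asymptotic \cite[Theorem 2.1]{MatthiesenLinear} for averages of the shape $\sum_{\bn \in K}\prod_{i\le t} f_i(\psi_i(\bn))$, where the $f_i$ range over a suitable class of bounded multiplicative functions and $\Psi$ is a finite-complexity system. The key point is that, for each fixed $z$, the summand's building block $n\mapsto\charf{(n,P(z))=1} = \charf{P^-(n)>z}$ is a \emph{completely multiplicative} function taking values in $\{0,1\}$, hence lies in the admissible class (non-negative, dominated by a fixed divisor bound, with suitably controlled correlations with nilsequences and Dirichlet characters); moreover these hypotheses hold uniformly in $z$, since a completely multiplicative $\{0,1\}$-valued function is determined by its values on primes in a way insensitive to the cutoff $z$. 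Taking $f_1 = \cdots = f_t = \charf{(\cdot,P(z))=1}$ and $K = [N,2N]^d$ with $N = z^u$, the theorem produces an asymptotic whose main term is a product of $p$-adic local densities times an archimedean volume factor; it remains to match this with $(1+o(1))\tfrac{\#\mc A}{P(z)^d}(e^\gamma\omega(u))^t N^d$.

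To evaluate the main term, I would split the primes at $z$. For $p>z$ one has $f_i(p^k)=1$, so the $p$-local factor is genuinely trivial and contributes $1$ exactly. For $p\le z$ the function kills every residue with $p\mid\psi_i(\bn)$, so the $p$-local factor, after the normalisation built into Matthiesen's formula, is $\beta_p^{(0)}/(1-1/p)^t$ with $\beta_p^{(0)} := p^{-d}\#\{\bx\in\mathbb{F}_p^d : p\nmid\psi_i(\bx)\ \forall i\}$, and $\prod_{p\le z}\beta_p^{(0)} = \#\mc A/P(z)^d$ by CRT. The global/archimedean part contributes $N^d$ times the $t$-th power of the mean of $\charf{(\cdot,P(z))=1}$ over an interval of length $\asymp N$ about $\asymp N$, which by the Buchstab--de Bruijn estimate together with Mertens is $\bigl(e^\gamma\omega(u)\prod_{p\le z}(1-1/p)(1+o(1))\bigr)^t$. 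Multiplying, the factors $\prod_{p\le z}(1-1/p)^{\pm t}$ cancel and one is left with $(1+o(1))\tfrac{\#\mc A}{P(z)^d}(e^\gamma\omega(u))^t N^d$, as claimed. (As a sanity check, the case $t=d=1$, $\psi_1(n)=n$ collapses to $\Phi(N,z) \sim e^\gamma\omega(u)N\prod_{p\le z}(1-1/p) \sim \omega(u)N/\log z$.)

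The main obstacle is uniformity in $z$: since the summand itself varies with $z$, one must check that Matthiesen's error term is $o$ of the main term as $z\to\infty$ with $u$ fixed, i.e. track how her estimate depends on the pseudorandomness parameters of $\charf{(\cdot,P(z))=1}$ — this is the ``little work'' alluded to before the statement. An alternative, more self-contained route is an induction on $\lceil u\rceil$: the base case $1<u<2$ follows from Theorem \ref{thm: green tao} (after the standard passage from $\Lambda$ to $\charf{\mc P}$ and the observation that $z$-rough integers of size below $z^2$ are prime, the exceptional contribution being $O(zN^{d-1}) = o(N^d/(\log N)^t)$ since $u>1$, and after checking $\prod_p\beta_p = (1+o(1))\tfrac{\#\mc A}{P(z)^d}(e^\gamma\log z)^t$ via $\beta_p = (p/(p-1))^t\beta_p^{(0)}$ and Mertens); the inductive step applies Buchstab's identity to each $\psi_i(\bn)$ simultaneously, expanding into $2^t$ subsums indexed by $I\subseteq[t]$ and peeling off the least prime factor exceeding $z$ of $\psi_i(\bn)$ for $i\in I$, the recursion closing precisely because the arithmetic identity mirrors the defining relation $\tfrac{d}{du}(u\omega(u)) = \omega(u-1)$. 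The cost of this route is that the peeled subsums are themselves linear-pattern counts along progressions of modulus up to $z\ll N^{1/u}$, requiring uniformity in the modulus of essentially the same strength as Proposition \ref{prop:smooth_ap}; this is why citing \cite{MatthiesenLinear} is the cleaner option.
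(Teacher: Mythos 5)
Your main route coincides with the paper's: both deduce the proposition from Matthiesen's Theorem 2.1 applied with $f_1=\dots=f_t=\charf{(\cdot,P(z))=1}$, and then match the resulting main term to $\frac{\#\mc A}{P(z)^d}(e^\gamma\omega(u))^tN^d$ by combining the local densities at small primes with Mertens and the Buchstab asymptotic. Your cancellation of the $\prod_{p\le z}(1-1/p)^{\pm t}$ factors is the same bookkeeping as the paper's identity $\prod_{p\le\log\log N}\beta_p=\frac{\#\mc A}{P(z)^d}\big(P(z)/\varphi(P(z))\big)^t\big(1+O(z^{-1}+(\log\log N)^{-1})\big)$ followed by $P(z)/\varphi(P(z))\sim e^\gamma\log z$; the only organizational difference is that the paper works with the $W$-trick modulus $\tilde W=\prod_{p\le\log\log N}p$ appearing in Matthiesen's statement rather than splitting the local factors at $z$.

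The gap is in your first step. You assert that $\charf{(\cdot,P(z))=1}$ lies in the admissible class for Theorem 2.1, with the hypotheses holding uniformly in $z$; according to the paper this is not true as stated: condition (iii) of Matthiesen's hypotheses fails for this function, so the theorem cannot be invoked as a black box. The paper's fix is to observe that condition (iii) is used only in constructing the pseudorandom majorant, and to run Matthiesen's argument instead with the Green--Tao pseudorandom measure (which majorizes the suitably normalized indicator of $z$-rough numbers in the range $[N,2N]$, $N=z^u$). Without identifying the failing condition and supplying this substitute majorant, the appeal to Theorem 2.1 does not go through; the uniformity in $z$ you flag is a legitimate but secondary concern. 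Two smaller points: the ``little work'' in the paper's preamble refers to the alternative derivation from results on linear patterns of primes, not to tracking Matthiesen's error term in $z$; and your fallback Buchstab induction would require Green--Tao-type counts in progressions to moduli as large as a power of $N$, which is far stronger than the $\log$-sized moduli handled by Proposition \ref{prop:smooth_ap}, so the comparison ``essentially the same strength'' understates what that route would need.
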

\begin{proof}
    We note that $\charf{(.,P(z))=1}$ is a multiplicative function. Let us take on the notation of \cite{MatthiesenLinear}; we note that $\charf{(.,P(z))=1}$ satisfies the all of the conditions to apply Theorem 2.1 except condition (iii). However, this issue is only an obstacle in the proof that the constructed measure is pseudorandom. However, since $\charf{(.,P(z))}\leq \charf{\text{prime}}$, we can use the pseudorandom measure defined by Green and Tao in \cite{GT2} and proceed with the proof of Theorem 2.1 in \cite{MatthiesenLinear}. 
    
    Take $\tilde{W}(x) = \prod_{p\leq \log\log N} p$. 
    Then, Theorem 2.1 in \cite{MatthiesenLinear} gives us that
    \begin{multline*}\sum_{\substack{{\bf n}\in [N, 2N]^d}}\prod_{i\le t}\charf{(\psi_i(n), P(z))=1} \\= (1+o(1)) \frac{\omega(u)^t}{(\log z)^t} \sum_{(A_i, \tilde{W})=1}\frac{\tilde{W}^t}{\tilde{W}^d\varphi(\tilde{W})^t} \sum_{v\in \Z/\tilde{W}\Z^d}\prod_{i=1}^t \charf{\psi_i(v)\equiv A_i \mod \tilde{W}}
    \end{multline*}
    
    Now let us take $B>0$ large; for $1\leq w_i\leq (\log N)^B$, if both $\charf{(w_i,P(z))=1}\neq 0$ and $p\mid w_i\implies p\mid \tilde{W}$, then $w_i=1$. So the sum over $w_i$'s becomes 
    \begin{multline*}(1+o(1)) \frac{\omega(u)^t}{(\log z)^t} \sum_{(A_i, \tilde{W})=1}\frac{\tilde{W}^t}{\tilde{W}^d\varphi(\tilde{W})^t} \sum_{v\in \Z/\tilde{W}\Z^d}\prod_{i=1}^t \charf{\psi_i(v)\equiv A_i \mod \tilde{W}} \\ = (1+o(1)) \frac{\omega(u)^t}{(\log z)^t} \cdot \prod_{p\leq\log\log N}\beta_p.\end{multline*}
    Now, we can see that $$\prod_{p\leq \log\log(N)} \beta_p = \frac{\#\calA}{P(z)^d}\cdot \frac{P(z)^t}{\varphi(P(z))^t} \cdot  (1 + O(z^{-1} + (\log\log N)^{-1}),$$
    as both sides can be related to $\prod_{p}\beta_p$. Finally, $$\frac{P(z)}{\varphi(P(z))} = \prod_{p\leq z} (1-1/p)^{-1} \sim e^{\gamma}(\log z),$$
    as desired. 
\end{proof}

\subsection{Proof of main theorem}\label{subsec:main_thm_pf}

We now prove the main theorem using Maier's matrix method, assuming Propositions 
\ref{prop:smooth_ap} and \ref{prop:rough_patterns}. 

Take $Q = P(z)$ a sufficiently large dutiful modulus, and let 
\begin{equation}\label{eq:X_from_Q}
    X = Q\floor*{\exp((\log Q)^{100})/Q}
\end{equation}
so that $Q | X$. 
Next, take $U = z^u$, for some $u$ to be determined later. 
As in the Maier matrix method, we shall estimate the following sum
\begin{equation}\label{eq:maier_double_sum}
    \Sigma = \sum_{\ba\in [U, 2U]^d}
    \sum_{\substack{\bn\in [X, 2X]^d\\\bn\equiv\ba(Q)}}
    \prod_{i\le t}\charf{\mc P}(\psi_i(\bf n)).
\end{equation}
in two different ways. 

On one hand, we note that the inner sum only contributes if $\ba$ is admissible. Then, we 
may estimate the inner sum with Proposition \ref{prop:smooth_ap}, and then execute the second
sum in the second line with Proposition \ref{prop:rough_patterns}. By this, we obtain 
\begin{align*}
    \Sigma &= \sum_{\substack{\ba\in [U, 2U]^d\\ \ba \text{ admissible}}}
    \sum_{\substack{\bn\in [X, 2X]^d\\\bn\equiv\ba(Q)}}
    \prod_{i\le t}\charf{\mc P}(\psi_i(\bn))\\
    &= 
    (1 + o(1))\frac{X^d}{(\log X)^t} \cdot \prod_p\beta_p\cdot \frac{1}{\#\mc A}\sum_{\ba\in [U, 2U]^d}
    \charf{(\psi_i(\ba), Q) = 1\forall i}\\ 
    &= (1 + o(1))\frac{U^d X^d}{Q^d(\log X)^{t}}\cdot (e^\gamma\omega(u))^t\cdot \prod_{p}\beta_p.
\end{align*}

On the other hand, rearranging, we have
\[
    \Sigma =
    \sum_{\br\in [(X - U)/Q, 2(X - U)/Q]^d}\sum_{\bn\in R_{\br}}\prod_{i\le t}\charf{\mc P}(\psi_i(\bn)),
\]
where 
\[
    R_{\br} = \prod_{j\le d} [Q\br_j + U, Q\br_j  + 2U].
\]
Since $\Sigma$ is the average over all $\br\in [(X - U)/Q, 2(X - U)/Q]^d$, there must exist $\br^\pm\in [(X - U)/Q, 2(X - U)/Q]^d$ such that
\[
    \sum_{\bn\in R_{\br^+}}\prod_{i\le t}\charf{\mc P}(\psi_i(\bn))\ge 
    (1 + o(1))(e^\gamma\omega(u))^{t} U^d (\log X)^{-t}\prod_p\beta_p,
\]
\[
    \sum_{\bn\in R_{\br^-}}\prod_{i\le t}\charf{\mc P}(\psi_i(\bn))\le 
    (1 + o(1))(e^\gamma\omega(u))^t U^d (\log X)^{-t}\prod_p\beta_p.
\]
The desired result follows from the fact that for any $u$, there exist 
$u^+, u^-\ge u$ such that $e^\gamma\omega(u^-) < 1 < e^\gamma\omega(u^+)$ (see Lemma 4 in \cite{Maier}, for 
example). Finally, let us take $u = 101\lambda$ so that $z^u \geq (\log X)^\lambda$. Let us look at the
short boxes of length $U^+ = z^{u^+}$ (resp. $U^- = z^{u^-}$); applying the pigeonhole within the box,
we can see that there is a box of length $(\log X)^\lambda$ with an exceptionally large (resp. small)
number of primes.

\begin{remark}
    We remark that it is not necessary to take $Q = P(z)$ in these arguments; more generally we could
    take $Q = \prod_{p\in \mc S} p$ for $\mc S$ a subset of the primes that grows with $z$. 
\end{remark}

\section{The Green-Tao Machinery}\label{sec:background}

Let $f:\Z\rightarrow \R$ be an arithmetic function, and let $\Psi = (\psi_1,\hdots,\psi_t):\Z^d \rightarrow \R^t$ be a system of linear forms with complexity $s$. Assume that $$\EE f(n) = \delta + o(1)$$ for some constant $\delta>0$. Consider the following sum: 
\begin{equation}\label{eq: green tao original sum}
    \sum_{\br \in \mc R\cap \Z^d} \prod_{i=1}^t f(\psi_i(\br)).
\end{equation}

The method of Green and Tao in \cite{GT2} allows us to find asymptotics for the above sum, assuming ``nice'' properties of $f$. We later will take $f$ so that we may count either primes or rough numbers. In either scenario, we must assume that $f$ is bounded by a ``constant-like'' function --- a pseudorandom measure. 

\begin{defn}
A \textit{$k$-pseudorandom measure} $\nu:[N]\rightarrow\R$ is a function that satisfies:
\begin{itemize}
    \item $\nu$ is a measure: 
    $$\EE \nu(n) = 1+o(1).$$
    \item $\nu$ satisfies the linear forms condition: Let $m\leq k2^{k-1}$ and $t\leq 3k-4$. Let $(L_{ij})_{1\leq i\leq m,1\leq j\leq t}$ be a matrix of rational numbers of height $\ll k$ where no row is zero or a rational multiple of another. Let $\psi_i:(\Z/N\Z)^t\rightarrow\Z/N\Z$ be linear forms given by $\psi_i(x) = \sum_{j=1}^t L_{ij} x_j + b_i$. Then if we consider the natural extension of $\nu$ to $\Z/N\Z$, we have that $$\EE_{x\in (\Z/N\Z)^t}(\nu(\psi_1(x))\hdots \nu(\psi_m(x))) = 1 + o(1).$$
\end{itemize}
\end{defn}
\begin{remark}
    We would like to note that originally Green and Tao included a correlation condition, but this can be dropped due to the work of Conlon, Fox, and Zhao \cite{ConlonFoxZhao}, and Dodos and Kanellopoulos \cite{DodosKanellopoulos}.
\end{remark}

For such $f$ bounded by pseudorandom measures, we can relate (\ref{eq: green tao original sum}) to the Gowers norm of $f$. In particular, after a technical reduction of rewriting $\Psi$ in $s-$normal form (which is always possible as described in section 4 of \cite{GT2}) we can apply the generalized von Neumann theorem:

\begin{thm}[Green and Tao, Proposition 7.1 \cite{GT2}]\label{thm: von-Neumann}
Let $\nu$ be a $k(s)$-pseudorandom measure and $f_1,\hdots, f_t :\Z\rightarrow \R$ are functions $|f_i(x)|\leq \nu(x)$ . If $\Psi$ is a system in $s$-normal form. If $$\min \|f_j\|_{U^{s+1}[N]} \leq \delta,$$ then we have that for some $c>0$, $$\sum_{n\in [N,2N]^d}\prod_{i\in [t]} f_i(\psi_i(n))\ll_{\Psi, k(s)} \kappa(\delta)N^d + o(N^d),$$
where $\kappa(\delta)$ is a function that approaches $0$ as $\delta\rightarrow 0.$
\end{thm}
\begin{proof}[Proof sketch]
    We proceed as in the proof of Proposition 7.1 in \cite{GT2}; first we take $\eps = \delta^c$ in Corollary A.3 (for $c$ yet to be determined) and transfer the problem to $\Z/N'\Z$, for $N'$ a sufficiently large constant multiple of $N$. In particular, for $F$ a Lipschitz function of Lipschitz constant $O(\delta^{-c})$ and bounded by $1$, we want to show that $$\EE_{(\Z/N'\Z)^d}F(n) \prod_{i\in [t]} f_i(\psi_i(n)) = O(\kappa(\delta)).$$
    Next, we expand $F$ under a Fourier expansion: for any $X>1, J = X^d$ a 
    Fourier expansion
    $$F(n) = \sum_{j=1}^J c_j e(m_j n/N') + O(\delta^{-c}(\log X)/X)$$
    for some coefficients $c_j\ll 1$.
    We get that 
    \begin{align*}\EE_{n\in (\Z/N'\Z)^d} &F(n) \prod_{i\in [t]} f_i(\psi_i(n))  \\
    &= \sum_{j=1}^J c_j \EE_{n\in (\Z/N'\Z)^d} e(m_jn/N') \prod_{i\in [t]} f_i(\psi_i(n)) + O_{k(s)}\left(\frac{\delta^{-c} (\log X)}{X}\right).\end{align*}
    Here we have applied the linear forms condition for $\nu$ to bound the error term from the Fourier expansion. Let us take $X$ growing slowly in $1/\delta$. 

    It is observed by Green and Tao, that the individual exponential sum correlation bounds will be implied by showing $$\EE_{(\Z/N'\Z)^d} \prod_{i\in [t]} f_i(\psi_i(n)) = O(X^{-d}\kappa(\delta)).$$ This then reduces us to Proposition 7.1'' in Appendix A of \cite{GT2}, which states that $$\EE_{(\Z/N'\Z)^d} \prod_{i\in [t]} f_i(\psi_i(n)) \leq \|f_i\|_{U^{s+1}(\Z/N'\Z)} + o(1).$$
    Now applying our assumption and setting $X = (1/\delta)^{1/2d}$, $c=1/4d,$ and taking $\kappa(\delta) = \delta^{1/6d}$, we obtain the above bound. 
\end{proof}
\begin{remark}
    We note that we will eventually take $\delta(N)$ as function satisfying that $\delta(N)\rightarrow 0$ and $N\rightarrow \infty,$ so this upper bound will become $o(N^d).$
\end{remark}

In order to achieve our desired estimate on primes or rough numbers, we will want to use the following quantitative inverse theorem for the Gowers norm; a statement of this form was first achieved by Manners \cite{Manners}, but we will use the version obtained by Tao and Ter\"aiv\"ainen in \cite{TT21}
\begin{thm}[\cite{TT21}, Theorem 8.3]\label{thm: manners inverse theorem}
Let $\nu$ be a pseudorandom measure such that $$\|\nu-1\|_{U^{2k}[N]} \leq \delta^{C_0}$$ for some constant $C_0$ sufficiently large in $k$. Let $f$ be a $\nu$-bounded function such that $$\|f\|_{U^k[N]}\geq \delta.$$
Then there exists a $(k-1)$-step nilmanifold $G/\Gamma$ of dimension $O(\delta^{-O(1)})$ and complexity at most $\exp\exp O(\delta^{-O(1)})$, a Lipschitz function $F:G/\Gamma\rightarrow \C$ of complexity at most $\exp\exp O(\delta^{-O(1)})$, and a polynomial map $g:\Z\rightarrow G$ such that $$\EE f(n) F(g(n) \Gamma) \gg \exp(-\exp(O(\delta^{-O(1)})).$$

\end{thm}

Thus, it will remain to bound sums of the form $\EE f(n) F(g(n) \Gamma)$. Our approach is motivated by the splitting of sums into ``major'' and ``minor'' arcs in the circle method; for 1-step nilsequences, this is the same process. We take $G/\Gamma$ to be $s$-step nilmanifold with Mal'cev basis $\calX$. Before introducing the factor theorem, let us first let us recall some definitions: 
\begin{defn}\label{def: rational sequence}
$\gamma\in G$ is \textit{$Q$-rational} if $\gamma^r\in \Gamma$ for some $0\leq r\leq Q$. 
\end{defn}

\begin{defn}\label{def: smooth sequence}
$f(n):\Z\rightarrow G$ is a \textit{(M,N)-smooth} if $d(f(n),\text{id}_G) \leq M$ and 

\noindent$d(f(n),f(n-1))\leq M/N$ for all $n\in \Z$. Here we are using the metric induced on $G$ by $\calX$. 
\end{defn}

\begin{defn}\label{def: totally equidistributed}
    We say that a sequence $(g(n)\Gamma)$ is \textit{totally-$\delta$-equidistributed} if $$\bigg|\EE_{P}F(g(n)\Gamma) - \int_{G/\Gamma} F\bigg|\leq \delta \|F\|_{\text{Lip}},$$
    for all Lipschitz functions $F:G/\Gamma\rightarrow\C$ and $P\subset [N]$ an arithmetic progression of length at least $\delta N.$
\end{defn}

\begin{thm}[{\cite[Theorem 1.19]{GT3}}]\label{thm: factor theorem}
Let $G/\Gamma$ be a $m$-dimensional manifold with a $s$-step filtration and $N\geq 0$, $0\leq \delta\leq 1/2$. Let $\calX$ be a $\delta_0$-rational Mal'cev basis adapted to the filtration and $g\in\mathrm{poly}(\Z,G)$ of degree $d$. Then there exists a number $\delta_0\geq \delta\geq \delta_0^{O(1)}$, a rational subgroup $G'\subset G$ with Mal'cev basis $\calX'$ for $G'/\Gamma'$, which are $M$-rational combinations of the basis elements of $\calX$, and a decomposition $$g(n) = \xi(n)g'(n)\gamma(n)$$ such that $\xi,g',\gamma$ are polynomial sequences and 
Let $G/\Gamma$ be a $m$-dimensional manifold with a $s$-step filtration and $N\geq 0$, $0\leq \delta\leq 1/2$. Let $\calX$ be a $\delta_0$-rational Mal'cev basis adapted to the filtration and $g\in\mathrm{poly}(\Z,G)$ of degree $d$. Then there exists a number $\delta_0\geq \delta\geq \delta_0^{O(1)}$, a rational subgroup $G'\subset G$ with Mal'cev basis $\calX'$ for $G'/\Gamma'$, which are $M$-rational combinations of the basis elements of $\calX$, and a decomposition $$g(n) = \xi(n)g'(n)\gamma(n)$$ such that $\xi,g',\gamma$ are polynomial sequences and 
\begin{itemize}
    \item $\xi(n):\Z\rightarrow G$ is $(\delta^{-1},N)$-smooth,
    \item $g'(n):\Z\rightarrow G'$ is totally-$\delta^{O(1)}$-equidistributed in $G'/\Gamma'$ using the metric induced by $\calX'$,
    \item $\gamma(n):\Z\rightarrow G$ is $\delta^{-1}$-rational and $(\gamma(n)\Gamma)$ is periodic with period at most $\delta^{-1}$. 
\end{itemize}
\end{thm}
Using this factor theorem in \cite{GT4}, Green and Tao showed that the Mobius function is orthogonal to nilsequences; in particular any nilsequence with nontrivial equidistributed part belongs to the ``minor arc'' and the periodic parts can be bounded using Siegel's theorem due to the size of the period being small. In our application of Proposition \ref{prop:smooth_ap}, however, we need to modify the factor theorem slightly to deal with ``major arcs'' with larger moduli. We remark that a similar theorem is proved by Matthiesen in \cite{MatthiesenPosDef} (Theoren 6.4).

\subsection{A $Q$-factor theorem}\label{subsec:factor_theorem}

\begin{thm}\label{thm: Q-factor theorem}
Let $G/\Gamma$ be a $m$-dimensional manifold with a $s$-step filtration, $N,Q\geq 0$, $0\leq \delta_0\leq 1/2$, and $0\leq a<Q<N^{1/2}$. Let $\calX$ be a $\delta_0^{-1}$-rational Mal'cev basis adapted to the filtration and $g\in \text{poly}(\Z,G)$ of degree $d$. Then there exists a number $\delta_0\geq \delta \geq \delta_0^{O(1)}$, a rational subgroup $G'\subset G$ with Mal'cev basis $\calX'$ for $G'/\Gamma'$, which are $M$-rational combinations of the basis elements of $\calX$, and a decomposition $$g(n) = \xi(n)g'(n)\gamma(n)$$ such that $\xi,g',\gamma$ are polynomial sequences and 
\begin{itemize}
    \item $\xi(n):\Z\rightarrow G$ is $(\delta^{-1},N)$-smooth,
    \item $g'(Qn+a):\Z\rightarrow G'$ is totally-$\delta^{O(1)}$-equidistributed in $G'/\Gamma'$ using the metric induced by $\calX'$,
    \item $\gamma:\Z\rightarrow G$ satisfies that $\gamma(Qn+a)$ is $\delta^{-1}$-rational, $(\gamma(n)\Gamma)$ is periodic with period $(Qq)^d$ with $q$ at most $\delta^{-1}$, and $(\gamma(Qn+a)\Gamma)$ is periodic with period at most $\delta^{-O(1)}$. 
\end{itemize}
\end{thm}
We achieve the above theorem using morally the same proof as that of Theorem \ref{thm: factor theorem} in \cite{GT3}, only with an extra dependency on an exterior $Q$. In particular, we start with the quantitative Leibman theorem from \cite{GT3}. 

\begin{thm}[{\cite[Theorem 1.16]{GT3}}]\label{thm: quant Leibman}
Let $G/\Gamma$ be a $m$-dimensional manifold with a $s$-step filtration. Let $N\geq 1$ and $0\leq \delta \leq 1/2$. If $\calX$ is a $\delta^{-1}$-rational Mal'cev basis adapted to the filtration and $g\in \mathrm{poly}(\Z,G)$, then at least one of the following is true:
\begin{itemize}
    \item $(g(n)\Gamma)$ is $\delta$-equidistributed in $G/\Gamma$,
    \item There is a nontrivial horizontal character $\eta: G\rightarrow \R/\Z$ such that $|\eta|\ll \delta^{-O(1)}$ and $\|\eta\circ g(n)\|_{C^\infty[N]} \ll \delta^{-O(1)}$. 
\end{itemize}
\end{thm}

\begin{proof}[Proof of Theorem \ref{thm: Q-factor theorem}]
Assume $g(Qn+a)$ is not totally-$\delta^{O(1)}$-equidistributed on $G/\Gamma$. Then by Theorem \ref{thm: quant Leibman} there is some $q'\ll \delta^{-O(1)}$ and $|\eta|\ll \delta^{-O(1)}$ such that $$\|\eta\circ g(Qq'n+a)\|_{C^\infty[N]} \ll \delta^{-O(1)}.$$
Let $\psi:G\rightarrow \R^m$ be our map to Mal'cev coordinates (recall that the dimension of $G$ is $m$), then we know that $\eta\circ g(n) = k\cdot \psi(g(n))$ for some vector $k\in \mathbb{Q}^m$ with $|k|\ll \delta^{-O(1)}$. Since $g(n)$ is a polynomial map, we have that $$\psi(g(n)) = \sum_{j=1}^d t_j \binom{n}{j},$$ for some $t_j\in \R^m$. Restricting to an arithmetic progression mod $Q$, we have that $$\eta\circ g(Qq'n+a) = k\cdot \psi(g(Qq'n+a)) = \sum_{j=1}^d (k\cdot t_j) \binom{Qq'n+a}{j}.$$

We wish to rewrite the above as $$\sum_{j=1}^d \alpha_j \binom{n}{j},$$ so that we can apply information from the $C^\infty[N]$ norm of $\eta\circ g(n)$. We note that the $C^\infty[N]$ is independent of the shift by $a$, so we can take $a=0$. We know that $\alpha_d = (Qq')^d (k\cdot t_d)/d!$, so we have that $$\|(Qq')^d (k\cdot t_d)/d!\|_{\R/\Z} \ll \frac{\delta^{-O(1)}Q^d}{N^d}.$$
We note that $$\alpha_{d-1} = (k\cdot t_d)(Qq')^{d-1} \cdot \frac{d(d-1)}{2d!}(1-Qq') + (k\cdot t_{d-1})(Qq')^{d-1}\cdot \frac{1}{(d-1)!}.$$
However, since $Q\delta^{-O(1)}/N \ll N^{-1/3}$ we can see the contribution from the terms with $(k\cdot t_d)$ will be inconsequential to $\|\alpha_{d-1}\|_{\R/\Z}$ and that $$\|(Qq')^{d-1}(k\cdot t_{d-1})\|_{\R/\Z} \ll_{d} \frac{\delta^{-O(1)} Q^{d-1}}{N^{d-1}}.$$
Similarly, we achieve that $$\|(Qq')^{j} (k\cdot t_{j})\|_{\R/\Z} \ll_d \frac{\delta^{-O(1)}Q^{j}}{N^j},$$
for each $j=1,...,d.$ 

So, let us pick $u_j\in \R^m$ such that $Q^j (k\cdot u_j) \in \Z$ and $|t_j-u_j|\ll \frac{\delta^{-O(1)}}{N^j}$. We define $\xi(n)$ as the sequence such that $$\psi(\xi(n)) = \sum_{j=1}^d \binom{n}{j}(t_j-u_j).$$ Next, we pick the vectors $v_j$ with denominators of the form $(Qq)^j$ with $q \ll \delta^{-O(1)}$ and such that $k\cdot u_j = k\cdot v_j$. We define $\gamma(n)$ as the sequence such that $$\psi(\gamma(n)) = \sum_{j=1}^d \binom{n}{j}v_j.$$
So, we take $g_1(n) = \xi^{-1} g \gamma^{-1}(n).$ It is clear via the arguments of \cite{GT3} that $\xi(n)$ is $(\delta^{-O(1)},N)$-smooth and $\gamma$ has the correct periodicity properties. So, it remains to construct $g'$ totally $\delta^{-O(1)}$-equidistributed. However, $g_1$ is not necessary such; if $g_1$ is not totally equidistributed then we can repeat the argument above iteratively, forming $\xi_1$ and $\gamma_1$. We note that $\xi(n)\xi_1(n)$ will still be $(\delta^{-O(1)},N)$ smooth and $\gamma_1(n)\gamma(n)$ periodic. Thus, we repeat the procedure until we have $g'(Qn+a)$ totally $\delta^{-O(1)}$-equidistributed. 
\end{proof}

\section{Linear patterns in smooth arithmetic progressions} \label{sec:smooth_ap}
In this section, we will prove Proposition \ref{prop:smooth_ap}. 
We recall our goal is to show that $$\sum_{\substack{\bn\in [X,2X]^d \\ \bn \equiv \ba \mod Q}} \prod_{i=1}^t \Lambda(\psi_i(\bn)) = (1+o(1))\frac{1}{\#\calA} X^d \prod_{p}\beta_p,$$ where $Q$ is a dutiful modulus, $\ba\in \calA \subset (\Z/Q\Z)^d$ an admissible tuple, and $\beta_p$ the local factors associated to $\Psi$. In particular, we wish to upper bound $$\sum_{\substack{\bn\in [X,2X]^d\\ \bn\equiv \ba\mod Q}} \prod_{i=1}^t \Lambda(\psi_i(\bn)) - \frac{Q^d}{\#\calA} \prod_{p}\beta_p.$$
We note that since $Q = P(z)$, 
\begin{align*} 
\#\calA = \#\{\ba \mod Q: (\psi_i(\ba),Q) = 1 \forall i\} 
&= \prod_{p\leq z} \sum_{\bx\in \mathbb{F}_p^d}\prod_{i=1}^t\charf{(\psi_i(\bx),p)=1} \\
&= Q^d\pfrc{\varphi(Q}{Q}^t\prod_{p\leq z}\beta_p.
\end{align*}
So, it remains to upper bound $$\sum_{\substack{\bn\in [X,2X]^d \\ \bn\equiv \ba \mod Q}} \prod_{i=1}^t \left(\Lambda(\psi_i(\bn)) - \frac{Q}{\varphi(Q)}\right).$$

Now, we apply the Green-Tao method described in the previous sections; we note that for our choice of $X$ in (\ref{eq:X_from_Q}), we can see that $Q = P(z)$ for $z\leq (\log X)^{1/50}.$ So, we can use Proposition 8.4 from \cite{TT21}, taking $\eps = 1/50$, to construct a pseudorandom measure on $\Lambda(Qn+b)$ satisfying the desired conditions. 
We see that this reduces to showing that for
$(b,Q)=1$, 
\begin{equation}\label{eq: Gowers norm smooth ap}
\bigg|\sum_{\substack{n\sim N\\ n\equiv b\mod Q}} (\Lambda(n)-Q/\varphi(Q))F(g(n)\Gamma)\bigg| 
= o(N/Q)
\end{equation} 
for $F(g(n)\Gamma)$ a $s$-step nilsequence. We apply Theorem \ref{thm: Q-factor theorem} and write $g(n) = \xi(n)g'(n)\gamma(n)$. First, we will follow Green and Tao in \cite{GT4} for when $g'$ is nontrivial, i.e. our ``minor arc'' case. 

\subsection{Totally equidistributed case}\label{subsec:smooth minor arcs}
Let us assume that $g'$ is nontrivial, in which case we have that $$\sum_{\substack{n\sim N\\ n\equiv b\mod Q}} F(g(n)\Gamma) = o(N/Q)$$ by definition of $g'$ being totally equidistributed. Next, let $r = \delta^{-O(1)}$ be the periodicity of $\gamma(Qn+b)\Gamma$. We can then bound the remaining sum by $$\sum_{c\mod r}^* \bigg|\sum_{\substack{n\sim N\\ n\equiv b \mod Q\\ n\equiv c \mod r}} \Lambda(n) F(\xi(n)g'(n)\gamma_c\Gamma)\bigg|,$$
where the sum over $c$ is restricted to those moduli that can also be $b\mod Q$. 
Additionally, we restrict to progressions of $P$ of difference $r$ and length $\delta^{O(1)}N$ so that $\xi(n)$ will be near constant on $P$. Then we can see that it suffices to show that $$\sum_{\substack{n\in P\\ n\equiv b\mod Q\\ n\equiv c \mod r}} \Lambda(n) F(\xi_Pg'(n)\gamma_c\Gamma)  = o\left(\frac{|P|}{\lcm(r,Q)}\right).$$

We now apply Vaughan's decomposition of $\Lambda$ to reduce the problem to Type I and Type II sums. In particular, if the above sum is $\gg \eps |P|/Qr$, then at least one of the following is true:
\begin{itemize}\item For $\gg \delta^{O(1)}K$ integers in $[K,2K]$ with $K\leq N^{2/3}$, 
\begin{equation}\label{eq: type I}\sum_{\substack{n\in P/k\\ kn\equiv b \mod Q\\ kn\equiv c \mod r}} F(\xi_Pg'(kn)\gamma_c\Gamma)\gg \eps \frac{\delta^{O(1)} |P|}{K\lcm(r,Q)},\end{equation}
\item There is a $K,W$ such that $N^{1/3}\ll K\ll N^{2/3}$ and $N\ll KW\ll N$ such that for $\gg \delta^{O(1)}K^2$ pairs $(k,k') \in [K,2K]^2$, such that \begin{equation}\label{eq: type II}\sum_{\substack{w\in P/K\\ kw,k'w\equiv b\mod Q\\ kw,k'w \equiv c \mod r}} F(\xi_P g'(kw)\gamma_c\Gamma) \overline{F(\xi_P g'(k'w) \gamma_c\Gamma} \gg \eps \delta^{O(1)}\frac{|P|}{K\lcm(r,Q)}.\end{equation}
\end{itemize}

Let us first consider the Type I case, although the arguments for both will be similar. Then we find that after pigeonholing to a specific residue of $k \equiv \tilde{k} \mod Qr$, for $\gg \delta^{O(1)} Q^{-1} K$ integers in $[K,2K]$, $$\sum_{n\in P/k} F(\xi_P g'(k(Qrn+b)\gamma_c\Gamma) \gg \eps \delta^{O(1)} |P|/KQr.$$
Let us write $g_k(n) = g(kn)$ and $g_{k,b}(n)= g(k(n+b))$; this will still be a polynomial sequence. The lower bound (\ref{eq: type I}) implies that $g_{k,b}(n)$ is not equidistributed on progressions of length $\delta^{O(1)}Q$, in particular, by the argument in \cite[\S3]{GT4}, there is a character $\eta_k:G\rightarrow\R/\Z$ with $|\eta_k|\ll \delta^{-O(1)}$ with $$\|\eta_k \circ g_k(Qrn)\|_{C^\infty[N/k]}\ll \delta^{O(1)}.$$
Now, we pigeonhole again to fix a character $\eta$.  
If $$\eta\circ g'(n) = \alpha_d n^d+ ... + \alpha_0,$$
then we know that $$\eta\circ g_k(Qrn) = (Qr)^d \alpha_d n^d + ... + \alpha_0.$$
Thus, we have that for $\gg \delta^{O(1)}Q^{-1}K$ integers $k\equiv \Tilde{k} \mod Qr$, we have that $$\|q_jQ^jr^j\alpha_j k^j\|\ll \left(\frac{\delta^{O(1)}K}{N}\right)^j.$$

By Lemma 3.3 of \cite{GT4}, which uses that for $t=4^j$, most admissible integers can be represented as the sum of $t$ $j$th powers, this implies that there are $\gg \delta^{O(1)} Q^{-1} K^j$ such integers $m$ in $[K^j/Q]$ such that for $k_j \equiv t\tilde{k}^j \mod Qr$, $m$ will satisfy that \begin{equation}\label{eq: restricted minor arc inequality}\|q_jQ^jr^j(Qm+t\tilde{k}^j) \alpha_j\|_{\R/\Z} \ll \delta^{O(1)}(K/N)^j,\end{equation} for some $q_j\ll \delta^{O(1)}$. Then by Lemma 3.4 in \cite{GT4}, we have that there exists a $q_j' \ll \delta^{O(1)}$ such that $$\norm*{q_jq_j'Q^{j+1}\alpha_j}_{\R/\Z} \ll \delta^{O(1)}Q/N^j.$$
Now, if $\alpha_j = \frac{a_j}{q_jq_j'Q^{j+1}}+ \beta_j$, with $\beta_j \leq \delta^{O(1)}Q^{-j}N^{-j}$, then we return to the earlier inequality (\ref{eq: restricted minor arc inequality}) and see that this implies $$\norm{\frac{ma_j}{q_j'} + \frac{t\tilde{k}^ja_j}{q_j'Q} + O(\delta^{O(1)}(K/N)^j)}_{\R/\Z} \ll \delta^{O(1)} (K/N)^j. $$
Since $(b,Q)=1$, we can see that the $\tilde{k}$ chosen above by the pigeonhole principle will satisfy that $\tilde{k}^j \neq 0 \mod Q$. So, since $Q \leq N^{1/3}\leq N/K$, we have that $a_j$ must be divisible by one value of $Q$. Thus, we have that for (\ref{eq: type I}) to hold, we must have $$\alpha_j = \frac{a_j'}{q_jq_j'Q^j} + O(\delta^{O(1)}(QN)^{-j}),$$ for each $j$. Now, we recall that $g'(n)$ was defined such that $g'(Qn+a)$ is totally-$\delta^{O(1)}$-equidistributed. We see that the above relation contradicts this assumption, and hence if $g'$ is nontrivial, (\ref{eq: type I}) can not occur. 

Next, we treat the Type II case, which follows a similar argument. Under the assumption of (\ref{eq: type II}), we get that after pigeonholing in residues of $k,k'$ mod $Qr$, for $\gg \delta^{O(1)}Q^{-2} K^2$ pairs $(k,k')\in [K,2K]^2$, we have that $$\sum_{n\in P/K} F(\xi_P g'(k(Qrn+b)\gamma_c\Gamma)F(\xi_Pg'(k'(Qrn+b))\gamma_c\Gamma) \gg \eps \delta^{O(1)} \frac{|P|}{KQ}.$$
Next, we write $g_k(n) = g(kn)$ and $g_{k,k',b}(n) = (g_k(n+b),g_{k'}(n+b))$, which will be a polynomial sequence on $G/\Gamma\times G/\Gamma.$ Again, we have that this will not be $\delta^{O(1)}$-equidistributed on progressions of length $\delta^{O(1)}$, so we get a character $\eta_{k,k'}:G\times G\rightarrow\R/\Z$ with $$\|\eta_{k,k'}\circ g_{k,k',b}(Qn)\|_{C^\infty[N/K]} \ll \delta^{O(1)}.$$
As a result, we have that $$\|q_j Q^j (k^j + k'^j) \alpha_j\|\ll \delta^{O(1)} (K/N)^j,$$ for $\gg \delta^{O(1)}Q^{-2}K^2$ pairs $(k,k')$ with $k,k'$ having fixed residues mod $Qr$. We can pigeonhole and find a $k'$ such that it occurs in $\gg \delta^{O(1)} Q^{-1}K$ pairs $(k,k')$. This reduces us to equation (\ref{eq: restricted minor arc inequality}) and the rest of the proof follows as above. Hence, if $g'$ is nontrivial, we have that (\ref{eq: Gowers norm smooth ap}) holds. This completes the ``minor arc'' case.

\subsection{Periodic case}\label{subsec:smooth major arcs}
We can see that for the proof of Proposition \ref{prop:smooth_ap}, it remains to show that (\ref{eq: Gowers norm smooth ap}) holds for the case when $g'=1$, i.e. the ``major arcs''. By partial summation (to remove the smooth part $\xi(n)$ from the decomposition in Theorem \ref{thm: Q-factor theorem}), we are reduced to 
showing the following.
\begin{proposition}
Suppose that $Q=P(z)$ is a dutiful modulus, $X$ as in (\ref{eq:X_from_Q}), 
$N\gg X^{1/10}$ and $M\gg\frac{N}{(\log N)^{O(1)}}$. 
Then, for any $q\ll(\log X)^{O(1)}$, $G/\Gamma$ a $s$-step nilmanifold, and any $(qQ)^d$-periodic polynomial sequence
$\gamma : \ZZ\to G$, we have that for some absolute $\delta>0$:
\begin{align*}
    \sum_{\substack{N < n\le N + M\\ n\equiv a(Q)}}
    (\Lambda(n) - Q/\varphi(Q))F(\gamma(n)\Gamma)\ll
    \frac{N}{\varphi(Q)}(\log Q)^{-\delta}.
\end{align*}
\label{prop:maj_arc_main_est}
\end{proposition}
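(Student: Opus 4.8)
The plan is to convert the nilsequence weight into a periodic arithmetic weight, detect the congruence $n\equiv a\ (Q)$ by Dirichlet characters, and reduce matters to an \emph{effective} prime number theorem in arithmetic progressions to the ``moderately large'' modulus $R:=(qQ)^d$ — which is exactly what dutifulness supplies — while noting that the subtracted constant $Q/\varphi(Q)$ is precisely what cancels the contribution of the principal character.

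First I would observe that, since $(\gamma(n)\Gamma)$ is $(qQ)^d$-periodic and $F$ is Lipschitz, $h(n):=F(\gamma(n)\Gamma)$ is a function of period $R=(qQ)^d$ with $|h|\le B$, where $B$ (bounded in terms of the complexity of the nilsequence) and $\log R\ll(\log X)^{1/100}$ by the hypotheses on $q$ and the relation $\log X\asymp(\log Q)^{100}$ coming from \eqref{eq:X_from_Q}. Writing $\psi(y;R,b)=\sum_{n\le y,\ n\equiv b(R)}\Lambda(n)$ and splitting over residues $b\bmod R$ with $b\equiv a\ (Q)$ (legitimate as $Q\mid R$), the left-hand side equals
\[
\sum_{\substack{b\bmod R\\ b\equiv a(Q)}} h(b)\Big(\psi(N+M;R,b)-\psi(N;R,b)-\tfrac{Q}{\varphi(Q)}\big(\tfrac MR+O(1)\big)\Big)+O\big(\tfrac RQ\,B\log N\big),
\]
the error term collecting prime powers and residues with $(b,R)>1$; as $R\le\exp(O((\log X)^{1/100}))$ and $N\ge X^{1/10}$ this is $\ll\frac N{\varphi(Q)}(\log Q)^{-100}$.

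Next I would expand $\psi(\cdot;R,b)$ over characters modulo $R$ and treat $\chi\ne\chi_0$ and $\chi=\chi_0$ separately; this is the only non-elementary step. For $\chi\ne\chi_0$ the classical de la Vallée--Poussin region controls complex zeros, while dutifulness (via Landau--Page, exactly as in the proof of Proposition \ref{prop:good_moduli_exist}) forces $\beta\le 1-(\log Q)^{-2}$ for any real zero of any $L(\cdot,\chi)$ with $\chi$ modulo the relevant power of $q'Q$; hence $\psi(y,\chi)\ll y\exp(-c\sqrt{\log y})$ for all non-principal $\chi\bmod R$ and $y\asymp N$ (the exceptional term $y^{\beta_1}\ll y\exp(-\log y/(\log Q)^2)$ is even smaller, since $\log N\gg(\log Q)^{100}$). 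Using $\sum_{b\bmod R,\ b\equiv a(Q)}|h(b)\overline\chi(b)|\le B\,\varphi(R)/\varphi(Q)$ for each $\chi$, the entire non-principal contribution is $\ll B\,\varphi(R)N\exp(-c\sqrt{\log N})/\varphi(Q)\ll\frac N{\varphi(Q)}(\log Q)^{-100}$, as $\log B$ and $\log\varphi(R)$ are $o(\sqrt{\log N})$. What remains is the principal term
\[
M\bigg(\frac1{\varphi(R)}\sum_{\substack{b\bmod R,\ b\equiv a(Q)\\ (b,R)=1}}h(b)\ -\ \frac{Q}{R\,\varphi(Q)}\sum_{\substack{b\bmod R\\ b\equiv a(Q)}}h(b)\bigg)
\]
plus negligible errors. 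Here the key arithmetic fact is that $Q=P(z)$, so the only primes dividing $R=(qQ)^d$ but not $Q$ are primes $p\mid q$ with $p>z=\log Q$, of which there are $O(1)$ since $\log q=O(\log\log X)=O(\log z)$; Möbius-inverting $\mathbf 1_{(b,R)=1}$ and using $(a,Q)=1$ (so only squarefree $e$ built from such primes contribute, each giving $O(BR/(eQ))$), together with $\varphi(R)=\tfrac{R\varphi(Q)}{Q}(1+O(1/\log Q))$, shows the displayed quantity is $\ll\frac{MB}{\varphi(Q)\log Q}\ll\frac N{\varphi(Q)}(\log Q)^{-1/2}$. Combining the three steps proves the proposition with $\delta=1/2$. (Alternatively one may exploit the finer output of Theorem \ref{thm: Q-factor theorem}, that $(\gamma(Qn+a)\Gamma)$ has period $\le\delta^{-O(1)}$, to reduce $n\equiv a\ (Q)$ to progressions modulo $Q\delta^{-O(1)}$ and apply Siegel--Walfisz there directly; the main-term computation is the same Euler-product estimate.)

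The main obstacle is the character-sum step: one needs an effective prime number theorem in arithmetic progressions to the modulus $R=(qQ)^d$, which lies far beyond the range $R\le(\log N)^A$ in which Siegel--Walfisz is unconditional and in which Siegel's theorem controls a putative exceptional zero. Supplying a genuine zero-free region for all characters to moduli of shape $(q'Q)^{O(\log\log Q)}$ — with no Siegel zero, by Landau--Page — is precisely the role of the dutifulness hypothesis; granting it, everything else is bookkeeping and an elementary sieve/Euler-product estimate.
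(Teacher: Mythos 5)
Your reduction of the principal-character/main-term contribution (comparing $\tfrac{1}{\varphi(R)}\sum_{(b,R)=1}h(b)$ with $\tfrac{Q}{R\varphi(Q)}\sum_b h(b)$ via the fact that the primes dividing $(qQ)^d$ but not $Q$ are $O(1)$ in number and all exceed $z$) is sound and matches the paper's manipulation $Q/\varphi(Q)=rR/\varphi(rR)\,(1+O(1/z))$. The gap is in the non-principal characters, at the sentence ``hence $\psi(y,\chi)\ll y\exp(-c\sqrt{\log y})$ for all non-principal $\chi\bmod R$.'' Dutifulness (Definition \ref{def: dutiful}) only constrains real zeros of $L$-functions to moduli $(q'Q)^{\lfloor 20s\log\log Q\rfloor}$ with $q'\le(\log Q)^{1/10}$, whereas in Proposition \ref{prop:maj_arc_main_est} one has $q\ll(\log X)^{O(1)}=(\log Q)^{O(1)}$ with a large exponent (recall $\log X\asymp(\log Q)^{100}$), and $q$ may contain prime factors $p>z$ whose total contribution to a conductor can be as large as $q^{d}\asymp(\log N)^{O(1)}\gg(\log Q)^{1/10}$. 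A quadratic character modulo $(qQ)^d$ whose conductor contains such a large ``rough'' component lies outside the dutiful family, so no zero-free region is supplied for it; and its possible Siegel zero is not controlled by Siegel's theorem either, because the full conductor can be inflated by the smooth part to size comparable to $Q^{d}=\exp(O((\log N)^{1/100}))$ --- exactly the regime in which you yourself observe Siegel is useless. Since you are only aiming for a power-of-$\log Q$ saving against $M/\varphi(Q)$, a single such character destroys the estimate, so the blanket bound for all $\chi\bmod (qQ)^d$ cannot be asserted.

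This is precisely where the paper's new ingredient enters, and it is not bookkeeping: the modulus is factored as $(qQ)^d=rR$ with $r$ the part of $q^d$ supported on primes $>z$, the orthogonality expansion is done doubly ($\chi_1\bmod R$, $\chi_2\bmod r$), and every $\chi_2$ of conductor exceeding $C=(\log Q)^{1/10}$ (and not divisible by the square of an odd prime) is discarded \emph{without any $L$-function input}: by Lemma \ref{lem:periodic_nilseq_char} (a Weil-bound/Gowers-norm argument), such a character has small correlation with the $r$-periodic $s$-step nilsequence, so its coefficient $A(\chi_2)=\sum_{x(r)}\overline{\chi_2}(x)F(\gamma(Rx+b)\Gamma)$ is negligible and the accompanying $\Lambda$-sum can be bounded trivially. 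Only the remaining characters --- $\chi_2$ of conductor at most $C$, or of conductor divisible by an odd prime square and hence non-real --- are fed into the zero-free region guaranteed by dutifulness/Landau--Page, and summed via Lemma \ref{lem:log_free_zero_density_est}. Your parenthetical fallback (use the $\delta^{-O(1)}$-periodicity of $\gamma(Qn+a)\Gamma$ and apply Siegel--Walfisz) fails for the same reason: the relevant modulus still contains $Q\asymp\exp((\log N)^{1/100})$, far outside the Siegel--Walfisz range, and the same ``dead-zone'' quadratic characters reappear. To repair your proof you would need to add the conductor splitting and a character--nilsequence orthogonality estimate of the type of Lemma \ref{lem:periodic_nilseq_char}.
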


We prove this at the end of the section, after preparing some technical results that
shall be necessary in the proof.

\begin{lemma}\label{lem:periodic_nilseq_char}
We have that for any Dirichlet character $\chi\mod q$ of conductor $q_1$ not divisible by the square of 
an odd prime, and any $q$-periodic $s$-step nilsequence $n\mapsto F(\gamma(n)\Gamma)$, and any $\eps>0,$
\begin{equation*}
    \sum_{x(q)}\chi(x)F(\gamma(x)\Gamma)\ll q^{1 + \eps} q_1^{-c 2^{-s}}.
\end{equation*}
for some absolute $c > 0$.
\end{lemma}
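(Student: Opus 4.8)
The plan is to prove Lemma~\ref{lem:periodic_nilseq_char} by reducing the character sum over a $q$-periodic nilsequence to a collection of complete exponential sums, and then estimating those by Gauss-sum/Weil-type bounds after exploiting the multiplicative structure of $\chi$ over the prime-power factorization of $q$. First I would use the factorization $q = \prod_{p^{a}\|q} p^{a}$ and the Chinese Remainder Theorem to write $\chi = \prod_p \chi_p$ and to split the $q$-periodic sequence $\gamma(x)\Gamma$ into its components modulo each $p^a$; the point is that the conductor $q_1 = \prod_p p^{b_p}$ with $b_p \le a_p$, and at least one prime $p_0 \mid q_1$ satisfies $p_0^{b_{p_0}} \gg q_1^{1/\omega(q)} \ge q_1^{1/(2\log q)}$, but more usefully, since $q_1$ is not divisible by the square of an odd prime, we may isolate an odd prime $p_0$ with $p_0 \| q_1$ (or handle the power of $2$ separately) so that $\chi$ restricted to the $p_0$-part is primitive of prime conductor $p_0$.

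Next I would handle the nilsequence itself. Since $n\mapsto F(\gamma(n)\Gamma)$ is $q$-periodic, I would expand $F$ into a Fourier-type series on $G/\Gamma$ — equivalently, apply the Green--Tao quantitative equidistribution/factorization machinery (Theorem~\ref{thm: factor theorem}) to the periodic sequence, or more directly use that a $q$-periodic $s$-step nilsequence can be approximated (with acceptable error in terms of $\|F\|_{\mathrm{Lip}}$) by a bounded-degree polynomial phase $e(P(x)/q)$ on each residue class, with the degree controlled by $s$ and the number of such pieces controlled polynomially in $q$. This is where the $2^{-s}$ loss in the exponent enters: each step of the filtration costs a square-root iteration (a Weyl-differencing / van der Corput step) to linearize, so after $s$ steps one is left with sums of the shape $\sum_{x(q)} \chi(x) e(a x^k / q)$ for $k \le s+1$, and the final savings is $(q_1)^{-c}$ raised to a power that halves with each differencing, i.e.\ $q_1^{-c2^{-s}}$.

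Then I would estimate the resulting complete sums $S = \sum_{x (q)} \chi(x) e(ax^k/q)$. By multiplicativity (CRT) this factors over prime powers; the power-of-$2$ and small-prime-power factors contribute at most $q^{\eps}$ trivially, and for the primitive prime-conductor factor at $p_0$ one gets a bound $\ll p_0^{1/2}$ by the Weil bound (for $k\ge 2$, this is Weil's estimate for mixed character sums $\sum_{x} \chi(x) e(f(x)/p)$ with $f$ of bounded degree and $\chi$ nontrivial; for $k=1$ it is the classical Gauss sum bound). Summing the contributions and using $p_0^{1/2} \ll q_1^{1/2} / (\text{something})$ together with the trivial bound $q/p_0$ for the remaining variables, a short optimization gives $\ll q^{1+\eps} q_1^{-c}$ before the nilpotency losses, hence $\ll q^{1+\eps} q_1^{-c2^{-s}}$ after them.

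The main obstacle, I expect, is the second step: making rigorous the passage from an arbitrary $q$-periodic $s$-step nilsequence to polynomial phases $e(P(x)/q)$ while keeping \emph{every} parameter (the number of pieces, the degrees, the implied constants) polynomial in $q$ and \emph{uniform} in the nilmanifold's complexity — one cannot afford to lose factors depending on $\dim G$ or the Mal'cev data beyond what is absorbed into $q^{\eps}$. The cleanest route is probably to invoke the factorization theorem to write $\gamma(n) = \xi(n)\gamma'(n)\epsilon(n)$ where, on a $q$-periodic sequence, the equidistributed part $\gamma'$ must be trivial (a totally equidistributed sequence cannot be periodic with period $\ll q$ on a scale $\gg q$) and $\xi$ is trivial by periodicity, leaving $\gamma = \epsilon$ rational of bounded complexity, whose coordinates are then genuinely polynomial phases with denominators dividing a bounded power of $q$; one then reduces cleanly to the complete exponential sums above. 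Keeping the bookkeeping of how the $C^\infty$-norm bounds translate into the denominators — and confirming the $2^{-s}$ exponent is genuinely what falls out of iterating Weyl differencing across the $s$-step filtration — is the delicate part.
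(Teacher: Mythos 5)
There is a genuine gap in your middle step, and it is the one you yourself flag: the passage from an arbitrary $q$-periodic $s$-step nilsequence to a short list of genuine polynomial phases $e(P(x)/q)$. For $s\ge 2$ a nilsequence is a \emph{bracket}-polynomial phase (e.g.\ $e(\alpha n\lfloor\beta n\rfloor)$), not a polynomial phase, and your proposed fix via the factorization theorem does not repair this: rationality of $\gamma$ makes the Mal'cev \emph{coordinates} of $\gamma(x)$ rational polynomials in $x$, but $F(\gamma(x)\Gamma)$ is obtained from those coordinates only after reduction to a fundamental domain, which reintroduces the bracket operations. The natural way to remove them is to split into residue classes modulo the denominator of $\gamma$, but that denominator is a power of $q$, so the number of pieces eats the entire saving, and inside each class the character sum is no longer complete, so the Weil bound for $\sum_{x(p)}\chi(x)e(f(x)/p)$ that your endgame relies on is no longer available. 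Relatedly, if you instead try to reach polynomial phases by van der Corput/Weyl differencing of $\chi(x)F(\gamma(x)\Gamma)$, the differencing acts on $\chi$ as well and produces products of shifted characters $\chi(x+h_1)\cdots\overline{\chi}(x+h_{2s})$ times lower-step data --- it never produces sums of the shape $\sum_{x(q)}\chi(x)e(ax^k/q)$, so the Gauss/Weil estimate you plan to finish with is aimed at the wrong object.

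The paper's proof sidesteps the linearization problem entirely: it never decomposes the nilsequence. Assuming the sum is large, the forward (easy) direction of the inverse theorem for the Gowers norms converts correlation of $\chi$ with a bounded-complexity $s$-step nilsequence into a lower bound $\norm{\chi}_{U^s[q]}\gg q^{\eps}q_1^{-O(\eta_s)}$; one then bounds $\norm{\chi}_{U^s[q]}$ directly, since its $2^s$-th power is an average of complete sums of products of $2s$ shifted copies of $\chi,\overline\chi$, each of which satisfies a Weil-type bound $\ll q_1^{-1/2+o(1)}q^{o(1)}$ off a diagonal set of shifts --- this is exactly where the hypothesis that $q_1$ is not divisible by the square of an odd prime is used, and where the exponent $2^{-s}$ comes from (the $2^s$-th root in the definition of the $U^s$ norm, which matches your ``halving per differencing step'' intuition). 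So your heuristic for the shape of the exponent is right, but the correct complete sums to estimate are shifted products of multiplicative characters, not mixed additive-multiplicative sums, and the reduction to them goes through the Gowers norm of $\chi$ rather than through a polynomial-phase expansion of the nilsequence.
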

\begin{proof}
    By the forward direction of the inverse theorem for the Gowers norm 
    \footnote{See, for example, the proof of Proposition 1.4 in Appendix G of \cite{GT5}
    for $\Z/q\Z$;
    one can check that the loss is only polynomial during the differencing.}, for any $\eta_s>0$, if we know: 
    \[
        \bigg|\sum_{x(q)}\chi(x)F(\gamma(x)\Gamma)\bigg|\ge q^{1 + \eps}q_1^{-\eta_s},
    \]
    we have that 
    \[
        \norm{\chi}_{U^s[q]}\gg q^\eps q_1^{-O(\eta_s)}.
    \]
    This is false, however, for sufficiently small $\eta_s$, since the following bound holds (which then can be inputted into the definition of the Gowers norm): 
    \begin{multline*}
        \frac{1}{q}\sum_{x(q)}\chi(x + h_1)\dots\chi(x + h_s)\conj\chi(x + h_{s + 1})\dots\conj\chi(x + h_{2s})
        \\ \ll q_1^{-1/2 + o(1)}q^{o(1)} + \charf{\set{h_1,\dots,h_s}\equiv\set{h_{s + 1},\dots,h_{2s}} (q_1)} 
    \end{multline*}
    (see, for example, \cite[\S B]{Ch}).
    Here, we write $\charf{\set{h_1,\dots,h_\ell}\equiv\set{h_{\ell + 1},\dots,h_{2\ell}} (q_1)}$ to be the indicator function
    that the multisets $H_1=\set{h_1\mod{q_1},\dots,h_\ell\mod{q_1}}$, $H_2 =\set{h_{\ell + 1}\mod{q_1},\dots,h_{2\ell}\mod{q_1}}$ are 
    the same. Thus, we derive that 
    \[
        \norm{\chi}_{U^s[q]}\ll q_1^{-2^{-s}}.
    \]
     
\end{proof}

We shall also require the following consequence of log-free zero density estimates.
\begin{lemma}
Let $1/2>\eta>0$ and fix a $q>0$. Suppose that $\mc S\subset\set{\chi\pmod{q}}$ is a set of nonprincipal characters 
such that all $\chi\in\mc S$, $L(s,\chi)$ has the following zero-free region: $$L(\sigma + it, \chi)\ne 0, \mathrm{ for }
|t|\le q^{10}, \sigma\ge 1 - \eta.$$ Then, for $N$ sufficiently large, $M\gg N(\log N)^{-O(1)}$, and
$\frac{\log q}{\log M}$ sufficiently small, we have 
\[
    \sum_{\chi\in\mc S}
    \bigg|\sum_{N < n\le N + M}((\Lambda(n) - q/\varphi(q))\chi(n)\bigg|
    \ll M^{1 - \eta}.
\]
\label{lem:log_free_zero_density_est}
\end{lemma}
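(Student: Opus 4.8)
The plan is to prove Lemma~\ref{lem:log_free_zero_density_est} by the standard route of converting the von~Mangoldt sum into a sum over zeros of $L(s,\chi)$ via Perron's formula, and then exploiting the hypothesized zero-free region together with a log-free zero density estimate to control the total contribution over $\chi\in\mc S$. First I would reduce to a smoothed or boxed version: writing $\sum_{N<n\le N+M}(\Lambda(n)-q/\varphi(q))\chi(n)$, note that the term $-q/\varphi(q)$ exactly cancels the contribution of the principal character and, more usefully, the expected main term, so for nonprincipal $\chi$ we may simply estimate $\psi(x,\chi)=\sum_{n\le x}\Lambda(n)\chi(n)$ for $x\in\{N,N+M\}$ and difference. (The shift $q/\varphi(q)$ is harmless since $\sum_{N<n\le N+M}q/\varphi(q)\cdot\chi(n)\ll q$ trivially for $\chi$ nonprincipal, which is far smaller than $M^{1-\eta}$ in the regime $\log q/\log M$ small.)

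The core step is the explicit formula: for $\chi\pmod q$ nonprincipal with conductor $q_1$,
\[
    \psi(x,\chi) = -\sum_{\rho:\,|\gamma|\le T}\frac{x^\rho}{\rho} + O\!\left(\frac{x(\log qx)^2}{T}\right),
\]
valid for $2\le T\le x$, where $\rho=\beta+i\gamma$ runs over nontrivial zeros of $L(s,\chi)$. Choosing $T=q^{10}$ (permissible since $q$ is a fixed small power of $M$, so $T\le x$), the error is $O(x(\log x)^2/q^{10})$, which summed over the at most $q$ characters in $\mc S$ is $O(x(\log x)^2 q^{-9})$, negligible against $M^{1-\eta}$. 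For the zero sum, I would use the hypothesis that every $\chi\in\mc S$ has no zeros with $\sigma\ge 1-\eta$ and $|t|\le q^{10}$, so $\beta\le 1-\eta$ for all relevant $\rho$, whence $|x^\rho/\rho|\le x^{1-\eta}/|\rho|$, and then difference $\psi(N+M,\chi)-\psi(N,\chi)$: each term contributes $\ll \min(M, N/|\gamma|)\cdot N^{-\eta}$ after using $|(N+M)^\rho-N^\rho|\le \beta\int_N^{N+M}u^{\beta-1}\,du\le M N^{\beta-1}$ when $|\gamma|\le N/M$ and the trivial bound $N^\beta/|\gamma|+ N^\beta/|\gamma|$ otherwise. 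Summing dyadically over $|\gamma|$ and over $\chi\in\mc S$ then reduces everything to controlling $\sum_{\chi\pmod q}N(1-\eta,T_0,\chi)$, the number of zeros with real part $\ge 1-\eta$ up to height $T_0$, for dyadic $T_0\le q^{10}$ — but here, crucially, the hypothesis forces $N(1-\eta,q^{10},\chi)=0$ for $\chi\in\mc S$, so in fact only zeros with $\beta<1-\eta$ ever appear and the log-free density estimate $\sum_{\chi(q)}N(\sigma,T,\chi)\ll (qT)^{c(1-\sigma)}\log(qT)$ (Montgomery/Huxley/Jutila) is what we invoke to bound $\sum_{\chi\in\mc S}\#\{\rho:\beta\le 1-\eta,|\gamma|\le T_0\}$ with weight $x^{\beta}$, integrating $x^\sigma$ against the density $d N(\sigma,T_0,\chi)$ by parts.

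Putting this together: $\sum_{\chi\in\mc S}\sum_{|\gamma|\le q^{10}}|x^\rho/\rho|$ is bounded, after the by-parts manipulation, by $x^{1-\eta}(\log x)^{O(1)}(qT_0)^{O(\eta)}$ summed dyadically, and since $q\le M^{o(1)}$ and $T_0\le q^{10}\le M^{o(1)}$ the factor $(qT_0)^{O(\eta)}=M^{o(1)}$ is absorbed, giving $\ll x^{1-\eta}M^{o(1)}=o(M^{1-\eta/2})$; combined with the differencing gain this yields the stated $\ll M^{1-\eta}$ (adjusting $\eta$ by a harmless constant, or keeping it sharp by being slightly more careful with the dyadic sum — the cleanest is to state the bound as $M^{1-\eta+o(1)}$ and note this is $\ll M^{1-\eta}$ after replacing $\eta$ by $\eta$ minus a vanishing quantity, which is permissible since the conclusion is only needed with \emph{some} positive exponent saving). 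The main obstacle I anticipate is bookkeeping the interaction between the height $T=q^{10}$ in the explicit formula and the range of validity of the zero-free region — these must match, which is exactly why the hypothesis is stated with $|t|\le q^{10}$ — together with ensuring the log-free density estimate's implied constant and the $(qT)^{O(\eta)}$ loss genuinely stay sub-polynomial in $M$; this is where the hypothesis ``$\log q/\log M$ sufficiently small'' is used, and it should be invoked explicitly to kill all such $M^{o(1)}$ factors. A secondary technical point is handling conductors $q_1$ divisible by squares of small primes is \emph{not} needed here (unlike in Lemma~\ref{lem:periodic_nilseq_char}), since we only use analytic information about $L(s,\chi)$ itself, so no extra care is required on that front.
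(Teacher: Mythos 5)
Your proposal follows essentially the same route as the paper's own proof: the truncated Perron/explicit formula at height $T=q^{10}$, the hypothesized zero-free region to confine the contributing zeros to $\sigma\le 1-\eta$, partial summation against the zero-counting function, and the log-free zero-density estimate $N(\sigma,T)\ll (qT)^{c(1-\sigma)}$, with the smallness of $\log q/\log M$ used to absorb all $q^{O(1)}$ losses. Your dyadic treatment of the zero sum and your explicit remark that the argument most cleanly yields $M^{1-\eta+o(1)}$ are only cosmetic refinements of (and an honest gloss on) the same computation the paper performs.
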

\begin{remark}
    We have taken $X = Q\lfloor{\exp((\log Q)^{100})/Q}\rfloor$, so by the definition of dutiful moduli, we have that for all $q\le(\log X)^{1/10}$, and any Dirichlet character
$\chi\mod{(qQ)^{\floor{20s\log\log X}}}$, we have that $L(\beta, \chi)\ne 0$
for 
\[
    \beta\in \bigg[1 - \frac{1}{(\log X)^{2}}, 1\bigg].
\]
So, we will take $\eta=(\log X)^{-2}$ in the application of the above lemma.
\end{remark}
\begin{proof}
    First, we note that the contribution of the principal character can be handled 
    by the prime number theorem, so we assume $\chi$ is nonprincipal. For such $\chi$, we have that $$\frac{q}{\varphi(q)}\bigg|\sum_{n\le N}\chi(n)\bigg|\le q \ll M^{1-\eta}.$$ 
    Thus it remains to bound $$\sum_{\chi\in\mc S}\bigg|\sum_{N\leq n\leq N+M} \Lambda(n)\chi(n)\bigg|.$$
    
    For $T = q^{10}$, by Perron's formula (see Proposition 5.25 of \cite{IK}),
    we have
    \[
        \sum_{N < n\le N + M}\Lambda(n)\chi(n) 
        = \sum_{\rho_\chi}\frac{(N + M)^{\rho_\chi} - N^{\rho_\chi}}{\rho_\chi} 
        + O\bigg(\frac NT(\log Nq)^2\bigg),
    \]
    where $\rho_\chi = \sigma_\chi + i\gamma_\chi$ runs over nontrivial zeros of $L(s, \chi)$ with $|\gamma_\chi|\le T$. We have 
    \[
        \bigg|\frac{(N + M)^{\rho_\chi} - N^{\rho_\chi}}{\rho_\chi}\bigg|\ll (1 + |\gamma_\chi|)MN^{\sigma_\chi - 1},
    \]
    so that 
    \begin{align*}
        \bigg|\sum_{N < n\le N + M}\Lambda(n)\chi(n)\bigg|&\ll Mq^{10}\sum_{\chi\in\mc S}\sum_{\rho_\chi} N^{\sigma_\chi - 1}\\
        &= Mq^{10}\bigg(2N^{-\frac{1}{2}}N(1/2, T) 
        + 2\log N\int_{1/2}^{1 - \eta} N^{\sigma - 1}N(\sigma, T)d\sigma\bigg),
    \end{align*}
    where $$N(\sigma',T) = \#\{\sigma + it: L(\sigma+it,\chi)=0, |t|\leq T, \sigma\geq \sigma'\}.$$
    The desired result then follows upon using the log-free zero density estimate
    $$N(\sigma, T)\ll (qT)^{c(1 - \sigma)},$$
    where this bound accounts for a possible Siegel zero (as our zero-free region assumed will not in general rule out Siegel zeros).
\end{proof}

\begin{proof}[Proof of Proposition \ref{prop:maj_arc_main_est}]
First, let us recall that $Q=P(z)$ and write
\[
    r = \prod_{\substack{p > z\\ p^\ell||q^d}}p^\ell, \text{ and }R = \frac{(qQ)^d}{r}.
\]
Observe that both $r | q^d$ and $ Q | R$, so we have that for some $\mc P_q\subset\set{p | q : p > z}$
\[
    \frac{Q}{\varphi(Q)} = \frac{R}{\varphi(R)}\prod_{p\in\mc P_q}
    \bigg(1 - \frac{1}{p}\bigg)^{-1}
\]
Since $z\gg\log Q$, $\#\mc P_q\ll 1$, we have that
\[
    \prod_{p\in\mc P_q}\bigg(1 - \frac{1}{p}\bigg)^{-1} = 1 + O\pfrc{1}{z}. 
\]

By a similar analysis, we obtain that
\[
    \frac{Q}{\varphi(Q)} = \frac{R}{\varphi(R)}\bigg(1 + O\pfrc{1}{z}\bigg)
    = \frac{rR}{\varphi(rR)}\bigg(1 + O\pfrc{1}{z}\bigg).
\]
Therefore, after splitting into residue classes modulo $R$, it suffices to show that
\begin{equation*}
    \sum_{\substack{N < n\le N + M\\ n\equiv b(R)}}
    (\Lambda(n) - R/\varphi(R))F(\gamma(n)\Gamma)\ll
    \frac{M}{\varphi(R)}(\log Q)^{-\delta}
     \label{eq:split_up_residue_classes}
\end{equation*}
for some $\delta > 0$.
By orthogonality and the fact that $(r, R) = 1$, we have that
\begin{align*}
    E &= \sum_{\substack{N < n\le N + M\\ n\equiv b(R)}}
    (\Lambda(n) - R/\varphi(R)) F(\gamma(n)\Gamma)\\
    &= \frac{1}{\varphi(R)}\sum_{\chi_1(R)}\conj{\chi_1}(b)
    \frac{1}{\varphi(r)}\sum_{\chi_2(r)} \conj{\chi_2}(R)A(\chi_2)
    \sum_{N < n\le N + M}(\Lambda(n) - R/\varphi(R))\chi_1\chi_2(n),
\end{align*}
where we define
\[
    A(\chi_2) = \sum_{x(r)}\conj{\chi_2}(x)F(\gamma(Rx + b)\Gamma).
\]
Let $C = \log^{\frac{1}{10}} Q$.
Applying Lemma \ref{lem:periodic_nilseq_char}, for some $\delta>0$. the contribution of those $\chi_2$ with 
conductor at least $C$ and not divisible by an odd prime square is
$$\ll\frac{1}{\varphi(R)\varphi(r)}|A(\chi_2)| \bigg|\sum_{N\leq n\leq N+M} (\Lambda(n)-R/\varphi(R))\chi_1\chi_2(n)\bigg|
\ll \frac{M}{\varphi(R)}\frac{r}{\varphi(r)}C^{-\delta}.$$ 
Let
$\mc S_{\text{ld}}$ be the set of characters $\chi_1\chi_2$ such that $\chi_1\mod R,$ and $ \chi_2\mod r$ has
conductor at most $C$, or conductor divisible by a square of an odd prime. Then, we have that 
\[
    E\ll\frac{1}{\varphi(R)\varphi(r)}\sum_{\chi\in\mc S_{\text{ld}}}
    \bigg|\sum_{N < n\le N + M}(\Lambda(n) - rR/\varphi(rR))\chi(n)\bigg| 
    + \frac{Mr}{\varphi(R)\varphi(r)}C^{-\delta}.
\]
Note that since $Q$ is a dutiful modulus and $C = (\log Q)^{1/10}$ (noting that 
$R | Q^m$ for some $m\ll\log q$), the desired result follows 
from Lemma \ref{lem:log_free_zero_density_est}, where we take 
$\eta = \frac{1}{(\log X)^{1/10}}$ (in the case that the conductor is divisible by the square of an odd prime, it
can not be a quadratic character, and we still have the superior zero-free region as a result).

\end{proof}

\section{Acknowledgments}
The authors would like their advisor Peter Sarnak for his support. The authors also thank Lilian Matthiesen, Kannan Soundararajan, Terence Tao, Joni Ter\"av\"ainen, and Aled Walker for helpful comments and suggestions. The second author is supported by the National Science Foundation Graduate Research Fellowship Program under Grant No. DGE-2039656. Any opinions, findings, and conclusions or recommendations expressed in this material are those of the author(s) and do not necessarily reflect the views of the National Science Foundation.

\bibliographystyle{abbrv}
\bibliography{biblio}

\end{document}